\newtheorem{theorem}{Theorem}
\theoremstyle{plain}
\newtheorem{definition}[theorem]{Definition}
\newtheorem{lemma}[theorem]{Lemma}
\newtheorem{proposition}[theorem]{Proposition}
\newtheorem{remark}[theorem]{Remark}
\numberwithin{equation}{section}
\begin{document}
\title[$T1$ implies $Tp$]{$T1$ testing implies $Tp$ polynomial testing:
optimal cancellation conditions for $CZO$'s}
\author[E.T. Sawyer]{Eric T. Sawyer}
\address{ Department of Mathematics \& Statistics, McMaster University, 1280
Main Street West, Hamilton, Ontario, Canada L8S 4K1 }

\begin{abstract}
This paper is the third in an investigation begun in \texttt{arXiv:1906.05602%
} and \texttt{arXiv:1907.07571} of extending the $T1\,$theorem of David and
Journ\'{e}, and optimal cancellation conditions, to more general weight
pairs. The main result here is that the familiar $T1$ testing conditions
over indicators of cubes, together with the one-tailed $\mathcal{A}_{2}$
conditions, imply polynomial testing. Analogous results for fractional
singular integrals hold as well.

Applications include a $T1$ theorem for $\alpha $-fractional $CZO$'s $%
T^{\alpha }$ in the case of doubling measures when one of the weights is $%
A_{\infty }$ (and $T^{0}$ is bounded on unweighted $L^{2}\left( \mathbb{R}%
^{n}\right) $ when $\alpha =0$), and then to optimal cancellation conditions
for such $CZO$'s in similar situations.
\end{abstract}

\dedicatory{In memory of Professor Elias M. Stein.}
\email{sawyer@mcmaster.ca}
\maketitle
\tableofcontents

\section{Introduction and definitions}

In Theorem \ref{Tp control by T1} below, we show that for fractional Calder%
\'{o}n-Zygmund operators, the $\kappa $-Cube Testing conditions over
polynomials of degree less than $\kappa $ times indicators of cubes, are
`essentially' controlled by the familiar $1$-Cube Testing conditions over
indicators of cubes. This is then applied in Theorem \ref{Stein extension}
at the end of the paper, to obtain an extension of Stein's characterization 
\cite[Theorem 4 page 306]{Ste2}\footnote{%
`actually a rather direct consequence of the $T1$ theorem' in the words of
Stein \cite[page 306]{Ste2}.} of optimal cancellation conditions for Calder%
\'{o}n-Zygmund operators via the $T1$ theorem of David and Journ\'{e}. The
extension is to more general pairs of doubling measures, with one weight in $%
A_{\infty }$, in place of Lebesgue measure. We now recall the definitions
needed to formulate and prove these theorems.

Let $\sigma $ and $\omega $ be locally finite positive Borel measures on $%
\mathbb{R}^{n}$, and denote by $\mathcal{P}^{n}$ the collection of all cubes
in $\mathbb{R}^{n}$ with sides parallel to the coordinate axes. For $0\leq
\alpha <n$, the classical $\alpha $-fractional Muckenhoupt condition for the
measure pair $\left( \sigma ,\omega \right) $ is given by%
\begin{equation}
A_{2}^{\alpha }\left( \sigma ,\omega \right) \equiv \sup_{Q\in \mathcal{P}%
^{n}}\frac{\left\vert Q\right\vert _{\sigma }}{\left\vert Q\right\vert ^{1-%
\frac{\alpha }{n}}}\frac{\left\vert Q\right\vert _{\omega }}{\left\vert
Q\right\vert ^{1-\frac{\alpha }{n}}}<\infty ,  \label{frac Muck}
\end{equation}%
and the one-tailed conditions by%
\begin{eqnarray}
\mathcal{A}_{2}^{\alpha }\left( \sigma ,\omega \right) &\equiv &\sup_{Q\in 
\mathcal{Q}^{n}}\mathcal{P}^{\alpha }\left( Q,\sigma \right) \frac{%
\left\vert Q\right\vert _{\omega }}{\left\vert Q\right\vert ^{1-\frac{\alpha 
}{n}}}<\infty ,  \label{one-sided} \\
\mathcal{A}_{2}^{\alpha ,\ast }\left( \sigma ,\omega \right) &\equiv
&\sup_{Q\in \mathcal{Q}^{n}}\frac{\left\vert Q\right\vert _{\sigma }}{%
\left\vert Q\right\vert ^{1-\frac{\alpha }{n}}}\mathcal{P}^{\alpha }\left(
Q,\omega \right) <\infty ,  \notag
\end{eqnarray}%
where the reproducing Poisson integral $\mathcal{P}^{\alpha }$ is given by%
\begin{equation*}
\mathcal{P}^{\alpha }\left( Q,\mu \right) \equiv \int_{\mathbb{R}^{n}}\left( 
\frac{\left\vert Q\right\vert ^{\frac{1}{n}}}{\left( \left\vert Q\right\vert
^{\frac{1}{n}}+\left\vert x-x_{Q}\right\vert \right) ^{2}}\right) ^{n-\alpha
}d\mu \left( x\right) .
\end{equation*}%
The measure $\sigma $ is said to be \emph{doubling} if there is a positive
constant $C_{\limfunc{doub}}$, called the doubling constant, such that%
\begin{equation}
\left\vert 2Q\right\vert _{\mu }\leq C_{\limfunc{doub}}\left\vert
Q\right\vert _{\mu }\ ,\ \ \ \ \ \text{for all cubes }Q\in \mathcal{P}^{n}.
\label{def rev doub}
\end{equation}%
The absolutely continuous measure $d\omega \left( x\right) =w\left( x\right)
dx$ is said to be an $A_{\infty }$ weight if there are constants $%
0<\varepsilon ,\eta <1$, called $A_{\infty }$ parameters, such that%
\begin{equation*}
\frac{\left\vert E\right\vert _{\omega }}{\left\vert Q\right\vert _{\omega }}%
<\eta \text{ whenever }E\text{ compact }\subset Q\text{ a cube with }\frac{%
\left\vert E\right\vert }{\left\vert Q\right\vert }<\varepsilon .
\end{equation*}

Let $0\leq \alpha <n$. For $\kappa _{1},\kappa _{2}\in \mathbb{N}$ and $%
\delta >0$, we say that $K^{\alpha }\left( x,y\right) $ is a standard $%
\left( \kappa _{1}+\delta ,\kappa _{2}+\delta \right) $-smooth $\alpha $%
-fractional kernel if for $x\neq y$, and with $\nabla _{1}$ denoting
gradient in the first variable, and $\nabla _{2}$ denoting gradient in the
second variable, 
\begin{eqnarray}
&&\left\vert \nabla _{1}^{j}K^{\alpha }\left( x,y\right) \right\vert \leq
C_{CZ}\left\vert x-y\right\vert ^{\alpha -j-n-1},\ \ \ \ \ 0\leq j\leq
\kappa _{1},  \label{sizeandsmoothness'} \\
&&\left\vert \nabla _{1}^{\kappa }K^{\alpha }\left( x,y\right) -\nabla
_{1}^{\kappa }K^{\alpha }\left( x^{\prime },y\right) \right\vert \leq
C_{CZ}\left( \frac{\left\vert x-x^{\prime }\right\vert }{\left\vert
x-y\right\vert }\right) ^{\delta }\left\vert x-y\right\vert ^{\alpha -\kappa
_{1}-n-1},\ \ \ \ \ \frac{\left\vert x-x^{\prime }\right\vert }{\left\vert
x-y\right\vert }\leq \frac{1}{2},  \notag
\end{eqnarray}%
and where the same inequalities hold for the adjoint kernel $K^{\alpha ,\ast
}\left( x,y\right) \equiv K^{\alpha }\left( y,x\right) $, in which $x$ and $%
y $ are interchanged, and where $\kappa _{1}$ is replaced by $\kappa _{2}$,
and $\nabla _{1}$\ by $\nabla _{2}$. We also consider vector kernels $%
K^{\alpha }=\left( K_{j}^{\alpha }\right) $ where each $K_{j}^{\alpha }$ is
as above, often without explicit mention. This includes for example the
vector Riesz transform in higher dimensions.

Given a standard $\alpha $-fractional $CZ$ kernel $K^{\alpha }$, we consider
truncated kernels $K_{\delta ,R}^{\alpha }\left( x,y\right) =\eta _{\delta
,R}^{\alpha }\left( \left\vert x-y\right\vert \right) K^{\alpha }\left(
x,y\right) $ which uniformly satisfy (\ref{sizeandsmoothness'}). Then the
truncated operator $T_{\delta ,R}^{\alpha }$ with kernel $K_{\delta
,R}^{\alpha }$ is pointwise well-defined, and we will refer to the pair $%
T^{\alpha }=\left( K^{\alpha },\left\{ \eta _{\delta ,R}^{\alpha }\right\}
_{0<\delta <R<\infty }\right) $ as an $\alpha $-fractional singular integral
operator.

\begin{definition}
We say that an $\alpha $-fractional singular integral operator $T^{\alpha
}=\left( K^{\alpha },\left\{ \eta _{\delta ,R}^{\alpha }\right\} _{0<\delta
<R<\infty }\right) $ satisfies the norm inequality%
\begin{equation}
\left\Vert T_{\sigma }^{\alpha }f\right\Vert _{L^{2}\left( \omega \right)
}\leq \mathfrak{N}_{T_{\sigma }^{\alpha }}\left( \sigma ,\omega \right)
\left\Vert f\right\Vert _{L^{2}\left( \sigma \right) },\ \ \ \ \ f\in
L^{2}\left( \sigma \right) ,  \label{two weight'}
\end{equation}%
provided%
\begin{equation*}
\left\Vert T_{\sigma ,\delta ,R}^{\alpha }f\right\Vert _{L^{2}\left( \omega
\right) }\leq \mathfrak{N}_{T_{\sigma }^{\alpha }}\left( \sigma ,\omega
\right) \left\Vert f\right\Vert _{L^{2}\left( \sigma \right) },\ \ \ \ \
f\in L^{2}\left( \sigma \right) ,0<\delta <R<\infty .
\end{equation*}
\end{definition}

In the presence of the classical Muckenhoupt condition $A_{2}^{\alpha }$,
the norm inequality (\ref{two weight'}) is essentially independent of the
choice of truncations used, including \emph{nonsmooth} truncations as well -
see e.g. \cite{LaSaShUr3}.

The $\kappa $\emph{-cube testing conditions} associated with an $\alpha $%
-fractional singular integral operator $T^{\alpha }$ introduced by Rahm,
Sawyer and Wick in \cite{RaSaWi} are given, with a slight modification, by%
\begin{eqnarray}
\left( \mathfrak{T}_{T^{\alpha }}^{\left( \kappa \right) }\left( \sigma
,\omega \right) \right) ^{2} &\equiv &\sup_{Q\in \mathcal{P}^{n}}\max_{0\leq
\left\vert \beta \right\vert <\kappa }\frac{1}{\left\vert Q\right\vert
_{\sigma }}\int_{Q}\left\vert T_{\sigma }^{\alpha }\left( \mathbf{1}%
_{Q}m_{Q}^{\beta }\right) \right\vert ^{2}\omega <\infty ,
\label{def Kappa polynomial} \\
\left( \mathfrak{T}_{\left( T^{\alpha }\right) ^{\ast }}^{\left( \kappa
\right) }\left( \omega ,\sigma \right) \right) ^{2} &\equiv &\sup_{Q\in 
\mathcal{P}^{n}}\max_{0\leq \left\vert \beta \right\vert <\kappa }\frac{1}{%
\left\vert Q\right\vert _{\omega }}\int_{Q}\left\vert \left( T^{\alpha
}\right) _{\omega }^{\ast }\left( \mathbf{1}_{Q}m_{Q}^{\beta }\right)
\right\vert ^{2}\sigma <\infty ,  \notag
\end{eqnarray}%
with $m_{Q}^{\beta }\left( x\right) \equiv \left( \frac{x-c_{Q}}{\ell \left(
Q\right) }\right) ^{\beta }$ for any cube $Q$ and multiindex $\beta $, where 
$c_{Q}$ is the center of the cube $Q$, and where as usual we interpret the
right hand sides as holding uniformly over all sufficiently smooth
truncations of $T^{\alpha }$. The more familiar cube testing conditions, as
found in $T1$ theorems, are the case $\kappa =1$ of (\ref{def Kappa
polynomial}) and $m_{Q}^{\beta }=1$.

We also use the larger \emph{full }$\kappa $\emph{-cube testing conditions}
in which the integrals over $Q$ are extended to the whole space $\mathbb{R}%
^{n}$:%
\begin{eqnarray*}
\left( \mathfrak{FT}_{T^{\alpha }}^{\left( \kappa \right) }\left( \sigma
,\omega \right) \right) ^{2} &\equiv &\sup_{Q\in \mathcal{P}^{n}}\max_{0\leq
\left\vert \beta \right\vert <\kappa }\frac{1}{\left\vert Q\right\vert
_{\sigma }}\int_{\mathbb{R}^{n}}\left\vert T_{\sigma }^{\alpha }\left( 
\mathbf{1}_{Q}m_{Q}^{\beta }\right) \right\vert ^{2}\omega <\infty , \\
\left( \mathfrak{FT}_{\left( T^{\alpha }\right) ^{\ast }}^{\left( \kappa
\right) }\left( \omega ,\sigma \right) \right) ^{2} &\equiv &\sup_{Q\in 
\mathcal{P}^{n}}\max_{0\leq \left\vert \beta \right\vert <\kappa }\frac{1}{%
\left\vert Q\right\vert _{\omega }}\int_{\mathbb{R}^{n}}\left\vert \left(
T^{\alpha }\right) _{\omega }^{\ast }\left( \mathbf{1}_{Q}m_{Q}^{\beta
}\right) \right\vert ^{2}\sigma <\infty .
\end{eqnarray*}

Finally, as in \cite{SaShUr7}, an $\alpha $-fractional vector Calder\'{o}%
n-Zygmund kernel $K^{\alpha }=\left( K_{j}^{\alpha }\right) $ is said to be 
\emph{elliptic} if there is $c>0$ such that for each unit vector $\mathbf{u}%
\in \mathbb{R}^{n}$ there is $j$ satisfying%
\begin{equation*}
\left\vert K_{j}^{\alpha }\left( x,x+tu\right) \right\vert \geq ct^{\alpha
-n},\ \ \ \ \ \text{for all }t>0;
\end{equation*}%
and $K^{\alpha }=\left( K_{j}^{\alpha }\right) $ is said to be \emph{%
strongly elliptic} if for each $m\in \left\{ 1,-1\right\} ^{n}$, there is a
sequence of coefficients $\left\{ \lambda _{j}^{m}\right\} _{j=1}^{J}$ such
that%
\begin{equation}
\left\vert \sum_{j=1}^{J}\lambda _{j}^{m}K_{j}^{\alpha }\left( x,x+t\mathbf{u%
}\right) \right\vert \geq ct^{\alpha -n},\ \ \ \ \ t\in \mathbb{R}.
\label{Ktalpha strong}
\end{equation}%
holds for \emph{all} unit vectors $\mathbf{u}$ in the $n$-ant 
\begin{equation*}
V_{m}=\left\{ x\in \mathbb{R}^{n}:m_{i}x_{i}>0\text{ for }1\leq i\leq
n\right\} ,\ \ \ \ \ m\in \left\{ 1,-1\right\} ^{n}.
\end{equation*}%
For example, the vector Riesz transform kernel is strongly elliptic (\cite%
{SaShUr7}).

\subsection{Controlling polynomial testing conditions - main theorems}

We begin in dimension $n=1$ with the elementary formula for recovering a
linear function from indicators of intervals,%
\begin{equation}
\mathbf{1}_{\left[ a,b\right) }\left( y\right) \left( \frac{y-a}{b-a}\right)
=\int_{a}^{b}\mathbf{1}_{\left[ r,b\right) }\left( y\right) \frac{dr}{b-a},\
\ \ \ \ \text{for all }y\in \mathbb{R},  \label{elem form}
\end{equation}%
from which we conclude that for any locally finite positive Borel measure $%
\sigma $, and any operator $T$ bounded from $L^{2}\left( \sigma \right) $ to 
$L^{2}\left( \omega \right) $,%
\begin{equation*}
T_{\sigma }\left( \mathbf{1}_{\left[ a,b\right) }\left( y\right) \left( 
\frac{y-a}{b-a}\right) \right) \left( x\right) =T_{\sigma }\left(
\int_{a}^{b}\mathbf{1}_{\left[ r,b\right) }\left( y\right) \frac{dr}{b-a}%
\right) \left( x\right) =\int_{a}^{b}\left( T_{\sigma }\mathbf{1}_{\left[
r,b\right) }\right) \left( x\right) \frac{dr}{b-a}.
\end{equation*}%
We then use the testing estimate $\left\Vert T_{\sigma }\mathbf{1}_{\left[
r,b\right) }\right\Vert _{L^{2}\left( \omega \right) }^{2}\leq \left( 
\mathfrak{FT}_{T}\right) ^{2}\left\vert \left[ r,b\right) \right\vert
_{\sigma }$, together with Minkowski's inequality, to obtain 
\begin{eqnarray*}
&&\left\Vert T_{\sigma }\left[ \mathbf{1}_{\left[ a,b\right) }\left(
y\right) \left( \frac{y-a}{b-a}\right) \right] \right\Vert _{L^{2}\left(
\omega \right) }=\left\Vert T_{\sigma }\left[ \int_{a}^{b}\mathbf{1}_{\left[
r,b\right) }\left( y\right) \frac{dr}{b-a}\right] \right\Vert _{L^{2}\left(
\omega \right) } \\
&\leq &\int_{a}^{b}\left\Vert T_{\sigma }\left[ \mathbf{1}_{\left[
r,b\right) }\left( y\right) \right] \right\Vert _{L^{2}\left( \omega \right)
}\frac{dr}{b-a}\leq \int_{a}^{b}\mathfrak{FT}_{T}\sqrt{\left\vert \left[
r,b\right) \right\vert _{\sigma }}\frac{dr}{b-a} \\
&\leq &\mathfrak{FT}_{T}\sqrt{\int_{a}^{b}\left\vert \left[ r,b\right)
\right\vert _{\sigma }\frac{dr}{b-a}}=\mathfrak{FT}_{T}\sqrt{%
\int_{a}^{b}\left( \int_{\left[ r,b\right) }d\sigma \left( y\right) \right) 
\frac{dr}{b-a}} \\
&=&\mathfrak{FT}_{T}\sqrt{\int_{\left[ a,b\right) }\left( \int_{a}^{y}\frac{%
dr}{b-a}\right) d\sigma \left( y\right) }=\mathfrak{FT}_{T}\sqrt{\int_{\left[
a,b\right) }\frac{y-a}{b-a}d\sigma \left( y\right) }\leq \mathfrak{FT}_{T}%
\sqrt{\left\vert \left[ a,b\right) \right\vert _{\sigma }}\ ,
\end{eqnarray*}%
and hence $\mathfrak{FT}_{T}^{\left( 1\right) }\leq \mathfrak{FT}%
_{T}^{\left( 0\right) }\equiv \mathfrak{FT}_{T}$. Similarly, the identity%
\begin{equation*}
\mathbf{1}_{\left[ a,b\right) }\left( y\right) \left( \frac{y-a}{b-a}\right)
^{2}=\int_{a}^{b}1_{\left[ r,b\right) }\left( y\right) 2\left( \frac{y-r}{b-a%
}\right) \frac{dr}{b-a},\ \ \ \ \ \text{for all }y\in \mathbb{R},
\end{equation*}%
shows that%
\begin{eqnarray*}
&&\left\Vert T\left[ \mathbf{1}_{\left[ a,b\right) }\left( y\right) \left( 
\frac{y-a}{b-a}\right) ^{2}\right] \right\Vert _{L^{2}\left( \omega \right)
}=\left\Vert T\left[ \int_{a}^{b}\mathbf{1}_{\left[ r,b\right) }\left(
y\right) 2\left( \frac{y-r}{b-a}\right) dr\right] \right\Vert _{L^{2}\left(
\omega \right) } \\
&\leq &2\int_{a}^{b}\left\Vert T\left[ \mathbf{1}_{\left[ r,b\right) }\left(
y\right) \left( \frac{y-r}{b-a}\right) \right] \right\Vert _{L^{2}\left(
\omega \right) }\frac{dr}{b-a}\leq 2\mathfrak{FT}_{T}^{\left( 1\right) }%
\sqrt{\left\vert \left[ a,b\right) \right\vert _{\sigma }},
\end{eqnarray*}%
and hence $\mathfrak{FT}_{T}^{\left( 2\right) }\leq 2\mathfrak{FT}%
_{T}^{\left( 1\right) }$. Continuing in this manner we obtain%
\begin{equation*}
\mathfrak{FT}_{T}^{\left( \kappa \right) }\leq \kappa \ \mathfrak{FT}%
_{T}^{\left( \kappa -1\right) },\ \ \ \ \ \text{for all }\kappa \geq 1,
\end{equation*}%
which when iterated gives%
\begin{equation*}
\mathfrak{FT}_{T}^{\left( \kappa \right) }\left( \sigma ,\omega \right) \leq
\kappa !\mathfrak{FT}_{T}\left( \sigma ,\omega \right) .
\end{equation*}

By a result of Hyt\"{o}nen \cite{Hyt2}, see also \cite{SaShUr12} for the
straightforward extension to fractional singular integrals, the full testing
constant $\mathfrak{FT}_{T}\left( \sigma ,\omega \right) $ in dimension $n=1$%
, is controlled by the usual testing constant $\mathfrak{T}_{T}\left( \sigma
,\omega \right) $ and the one-tailed Muckenhoupt condition $\mathcal{A}%
_{2}^{\alpha }$ . Thus we have proved the following lemma for the case when $%
T=T^{\alpha }$ is a fractional CZ operator.

\begin{lemma}
Suppose that $\sigma $ and $\omega $ are locally finite positive Borel
measures on $\mathbb{R}$ and $\kappa \in \mathbb{N}$. If $T^{\alpha }$ is a
bounded $\alpha $-fractional CZ operator from $L^{2}\left( \sigma \right) $
to $L^{2}\left( \omega \right) $, then we have 
\begin{equation*}
\mathfrak{T}_{T^{\alpha }}^{\left( \kappa \right) }\left( \sigma ,\omega
\right) \leq \kappa !\mathfrak{T}_{T^{\alpha }}\left( \sigma ,\omega \right)
+C_{\kappa }\mathcal{A}_{2}^{\alpha }\left( \sigma ,\omega \right) \ ,\ \ \
\ \ \kappa \geq 1,
\end{equation*}%
where the constant $C_{\kappa }$ depends on the kernel constant $C_{CZ}$ in (%
\ref{sizeandsmoothness'}), but is independent of the operator norm $%
\mathfrak{N}_{T^{\alpha }}\left( \sigma ,\omega \right) $.
\end{lemma}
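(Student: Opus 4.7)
The plan is to assemble the conclusion from the two ingredients that are essentially already present in the preceding discussion: an iterative reduction of polynomial testing to indicator testing at the level of full testing constants, and Hytönen's theorem controlling full indicator testing by local indicator testing plus the one-tailed Muckenhoupt condition.

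For the iterative reduction, I would extend the calculations shown for $\kappa=1,2$ in the excerpt to general $\kappa$ by verifying the pointwise identity
\begin{equation*}
\mathbf{1}_{[a,b)}(y)\left(\frac{y-a}{b-a}\right)^{\kappa}
= \kappa\int_{a}^{b} \mathbf{1}_{[r,b)}(y)\left(\frac{y-r}{b-a}\right)^{\kappa-1}\frac{dr}{b-a},
\end{equation*}
which is immediate from Fubini (differentiating the right-hand side in an endpoint reproduces the $\kappa-1$ case). Applying $T_{\sigma}^{\alpha}$, Minkowski's inequality in $L^{2}(\omega)$, and the testing hypothesis for degree $\kappa-1$ polynomials on the subintervals $[r,b)$, one arrives at $\mathfrak{FT}_{T^{\alpha}}^{(\kappa)}\leq \kappa\,\mathfrak{FT}_{T^{\alpha}}^{(\kappa-1)}$ exactly as in the excerpt; iterating yields $\mathfrak{FT}_{T^{\alpha}}^{(\kappa)}\leq \kappa!\,\mathfrak{FT}_{T^{\alpha}}$. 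Passing between the left-based monomials $\bigl((y-a)/(b-a)\bigr)^{j}$ arising in the identity and the centered monomials $m_{Q}^{\beta}(y)=\bigl((y-c_{Q})/\ell(Q)\bigr)^{\beta}$ used in the formal definition is a triangular binomial change of basis and contributes only a combinatorial constant, which I absorb into $C_{\kappa}$.

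With this in hand, the second step is simply to invoke the Hytönen reduction cited in the excerpt: in dimension one, with $T^{\alpha}$ a bounded fractional CZ operator, one has
\begin{equation*}
\mathfrak{FT}_{T^{\alpha}}(\sigma,\omega)\leq C\bigl(\mathfrak{T}_{T^{\alpha}}(\sigma,\omega)+\mathcal{A}_{2}^{\alpha}(\sigma,\omega)\bigr),
\end{equation*}
for a constant $C$ depending only on $C_{CZ}$ and $\alpha$. Combining this with the output of step one and with the trivial inequality $\mathfrak{T}_{T^{\alpha}}^{(\kappa)}\leq \mathfrak{FT}_{T^{\alpha}}^{(\kappa)}$ produces
\begin{equation*}
\mathfrak{T}_{T^{\alpha}}^{(\kappa)}\leq \kappa!\,\mathfrak{FT}_{T^{\alpha}}\leq \kappa!\,C\bigl(\mathfrak{T}_{T^{\alpha}}+\mathcal{A}_{2}^{\alpha}\bigr),
\end{equation*}
which is the claim after setting $C_{\kappa}=\kappa!\,C$ and using $\kappa!\cdot C\cdot\mathfrak{T}_{T^{\alpha}}\leq \kappa!\,\mathfrak{T}_{T^{\alpha}}+C_{\kappa}\mathcal{A}_{2}^{\alpha}$ (after redefining $C_{\kappa}$).

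The only real subtlety I anticipate is verifying that the Hytönen constant $C$ is genuinely independent of the operator norm $\mathfrak{N}_{T^{\alpha}}(\sigma,\omega)$. This follows because the off-cube contribution in the full-to-local reduction is controlled purely by the pointwise kernel bound (\ref{sizeandsmoothness'}) paired against the Poisson integral $\mathcal{P}^{\alpha}(Q,\cdot)$, and not via a duality or interpolation argument that would invoke $\mathfrak{N}_{T^{\alpha}}$; nevertheless, one must quote the version of the reduction from \cite{Hyt2,SaShUr12} that tracks this dependence explicitly, since some formulations hide $\mathfrak{N}_{T^{\alpha}}$ inside the constant.
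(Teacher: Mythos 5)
Your proposal is correct and follows essentially the same route as the paper: the one-dimensional layer-cake identity plus Minkowski's inequality gives $\mathfrak{FT}_{T^{\alpha}}^{\left( \kappa \right) }\leq \kappa !\,\mathfrak{FT}_{T^{\alpha }}$, and then Hyt\"{o}nen's result (with its fractional extension in \cite{SaShUr12}) controls the full indicator testing constant by $\mathfrak{T}_{T^{\alpha }}$ and $\mathcal{A}_{2}^{\alpha }$ independently of $\mathfrak{N}_{T^{\alpha }}$, exactly as in the paper. The only small bookkeeping point is at the end: to land on the stated coefficient $\kappa !$ in front of $\mathfrak{T}_{T^{\alpha }}$ one should use the Hyt\"{o}nen reduction in the split form $\int_{\mathbb{R}}=\int_{Q}+\int_{\mathbb{R}\setminus Q}$, so that the local testing term enters with constant $1$ and only the off-interval tail is charged to $\mathcal{A}_{2}^{\alpha }$, rather than writing $\kappa !\,C\,\mathfrak{T}_{T^{\alpha }}\leq \kappa !\,\mathfrak{T}_{T^{\alpha }}+C_{\kappa }\mathcal{A}_{2}^{\alpha }$, which as written would need $C\leq 1$.
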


The higher dimensional version of this lemma will include a small multiple
of the operator norm $\mathfrak{N}_{T}\left( \sigma ,\omega \right) $ on the
right hand side, since we no longer have available an analogue of Hyt\"{o}%
nen's result. Nevertheless, for doubling measures, the two testing
conditions are equivalent in the presence of one-tailed Muckenhoupt
conditions (\ref{one-sided}) in all dimensions, and so we will be able to
prove a $T1$ theorem in higher dimensions in certain cases.

\begin{theorem}
\label{Tp control by T1}Suppose that $\sigma $ and $\omega $ are locally
finite positive Borel measures on $\mathbb{R}^{n}$, and let $\kappa \in 
\mathbb{N}$. If $T$ is a bounded operator from $L^{2}\left( \sigma \right) $
to $L^{2}\left( \omega \right) $, then\ for every $0<\varepsilon <1$, there
is a positive constant $C\left( \kappa ,\varepsilon \right) $ such that 
\begin{equation*}
\mathfrak{FT}_{T}^{\left( \kappa \right) }\left( \sigma ,\omega \right) \leq
C\left( \kappa ,\varepsilon \right) \mathfrak{FT}_{T}\left( \sigma ,\omega
\right) +\varepsilon \mathfrak{N}_{T}\left( \sigma ,\omega \right) \ ,\ \ \
\ \ \kappa \geq 1,
\end{equation*}%
and where the constants $C\left( \kappa ,\varepsilon \right) $ depend only
on $\kappa $ and $\varepsilon $, and not on the operator norm $\mathfrak{N}%
_{T}\left( \sigma ,\omega \right) $.
\end{theorem}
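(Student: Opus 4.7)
The plan is a direct discretization in all $n$ coordinates: replace the polynomial factor $m_{Q}^{\beta }$ by a step function that is constant on a fine grid of subcubes, which the full testing hypothesis handles, and absorb the smooth remainder using the a priori $L^{2}(\sigma )\to L^{2}(\omega )$ boundedness of $T$. Fix $Q\in \mathcal{P}^{n}$, a multiindex $\beta $ with $|\beta |<\kappa $, and a large integer $N$ to be chosen at the end depending on $(\kappa ,\varepsilon )$. Partition $Q$ into $N^{n}$ congruent subcubes $\{Q_{j}\}_{j=1}^{N^{n}}$ of side $\ell (Q)/N$ with centers $c_{Q_{j}}$, and decompose
\begin{equation*}
\mathbf{1}_{Q}m_{Q}^{\beta }=f_{\mathrm{step}}+f_{\mathrm{err}},\qquad f_{\mathrm{step}}\equiv \sum_{j=1}^{N^{n}}m_{Q}^{\beta }(c_{Q_{j}})\,\mathbf{1}_{Q_{j}},\qquad f_{\mathrm{err}}\equiv \sum_{j=1}^{N^{n}}\mathbf{1}_{Q_{j}}\bigl(m_{Q}^{\beta }-m_{Q}^{\beta }(c_{Q_{j}})\bigr).
\end{equation*}

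For the step piece, since $|m_{Q}^{\beta }|\leq 1$ on $Q$, Minkowski's inequality together with the full testing estimate $\|T_{\sigma }\mathbf{1}_{Q_{j}}\|_{L^{2}(\omega )}\leq \mathfrak{FT}_{T}(\sigma ,\omega )\sqrt{|Q_{j}|_{\sigma }}$ would give
\begin{equation*}
\|T_{\sigma }f_{\mathrm{step}}\|_{L^{2}(\omega )}\leq \mathfrak{FT}_{T}(\sigma ,\omega )\sum_{j=1}^{N^{n}}\sqrt{|Q_{j}|_{\sigma }}\leq N^{n/2}\,\mathfrak{FT}_{T}(\sigma ,\omega )\,\sqrt{|Q|_{\sigma }},
\end{equation*}
the second inequality by Cauchy-Schwarz against the counting measure on the $N^{n}$ subcubes combined with the disjointness identity $\sum _{j}|Q_{j}|_{\sigma }=|Q|_{\sigma }$. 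For the remainder, $m_{Q}^{\beta }$ is Lipschitz on $Q$ with constant at most $|\beta |/\ell (Q)\leq \kappa /\ell (Q)$, so on each $Q_{j}$ of diameter $\sqrt{n}\,\ell (Q)/N$ we have $|m_{Q}^{\beta }-m_{Q}^{\beta }(c_{Q_{j}})|\leq \sqrt{n}\,\kappa /N$, hence $\|f_{\mathrm{err}}\|_{L^{2}(\sigma )}\leq (\sqrt{n}\,\kappa /N)\sqrt{|Q|_{\sigma }}$, and boundedness of $T$ yields
\begin{equation*}
\|T_{\sigma }f_{\mathrm{err}}\|_{L^{2}(\omega )}\leq \frac{\sqrt{n}\,\kappa }{N}\,\mathfrak{N}_{T}(\sigma ,\omega )\,\sqrt{|Q|_{\sigma }}.
\end{equation*}

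Adding the two bounds, normalizing by $\sqrt{|Q|_{\sigma }}$, and choosing $N$ to be the smallest positive integer with $\sqrt{n}\,\kappa /N\leq \varepsilon $, one obtains
\begin{equation*}
\frac{\|T_{\sigma }(\mathbf{1}_{Q}m_{Q}^{\beta })\|_{L^{2}(\omega )}}{\sqrt{|Q|_{\sigma }}}\leq N^{n/2}\,\mathfrak{FT}_{T}(\sigma ,\omega )+\varepsilon \,\mathfrak{N}_{T}(\sigma ,\omega ),
\end{equation*}
with $N^{n/2}\leq C(\kappa ,\varepsilon )$. Taking $\sup _{Q\in \mathcal{P}^{n}}$ and $\max _{|\beta |<\kappa }$ then gives the theorem. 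The main conceptual point, and the reason an operator-norm term must appear in higher dimensions while it did not in the 1D argument based on the identity \eqref{elem form}, is the structural tension in this split: making the remainder uniformly small in $L^{\infty }$ forces a fine mesh, and a fine mesh inflates the step-function side through the unavoidable $N^{n/2}$ loss in Cauchy-Schwarz, since indicators of subcubes (unlike half-open intervals in 1D) cannot be combined into an exact integral representation of the monomial using cubes alone. Balancing the two terms via the choice $N=N(\kappa ,\varepsilon )$ is precisely what produces the asymmetric bound claimed, and this is the only delicate step in the proof.
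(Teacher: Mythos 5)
Your argument is correct, but it takes a genuinely different route from the paper's. The paper works one coordinate and one degree at a time: it represents $\mathbf{1}_{Q}m_{Q}^{\beta }$ exactly as an integral in $r$ of indicators of rectangles $\left[ 0,1\right) ^{n-1}\times \left[ r,1\right) $ times a lower-degree monomial (the higher-dimensional continuation of (\ref{elem form})), decomposes each such rectangle into at most $2^{nm-n-m+2}$ dyadic cubes plus a slab of thickness less than $\varepsilon $, controls the cubes by full testing, controls the slab by the operator norm after averaging in $r$ (the `probability' argument, which shows the slab contributes only $\varepsilon \,\mathfrak{N}_{T}^{2}\left\vert Q\right\vert _{\sigma }$ on average), and then iterates the resulting recursion $\mathfrak{FT}_{T}^{\left( \kappa \right) }\leq C_{m,\kappa -1}\mathfrak{FT}_{T}^{\left( \kappa -1\right) }+\varepsilon \mathfrak{N}_{T}$ down to degree zero. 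You instead make a single piecewise-constant approximation of the whole monomial on a uniform grid of $N^{n}$ subcubes: the step part is handled by full testing (correctly the \emph{full} constant, since $T_{\sigma }\mathbf{1}_{Q_{j}}$ must be measured on all of $\mathbb{R}^{n}$) together with Cauchy--Schwarz at the cost of $N^{n/2}$, and the Lipschitz remainder, of size $O(\kappa /N)$ in $L^{\infty }$ and hence $O\bigl((\kappa /N)\sqrt{\left\vert Q\right\vert _{\sigma }}\bigr)$ in $L^{2}\left( \sigma \right) $, is absorbed by $\mathfrak{N}_{T}$; choosing $N\approx \sqrt{n}\,\kappa /\varepsilon $ gives the stated bound in one stroke with the explicit constant $C\left( \kappa ,\varepsilon \right) \approx \left( \sqrt{n}\,\kappa /\varepsilon \right) ^{n/2}$, which, like the paper's constants, also depends on the fixed dimension $n$. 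Your route buys simplicity: no induction on the degree, no rectangle-to-cube dyadic decomposition, no averaging argument, a constant that does not deteriorate multiplicatively in $\kappa $, and the observation that nothing about polynomials is used beyond $\left\vert m_{Q}^{\beta }\right\vert \leq 1$ and $\left\vert \nabla m_{Q}^{\beta }\right\vert \lesssim \kappa /\ell \left( Q\right) $ on $Q$, so the same estimate holds for arbitrary normalized Lipschitz multipliers. The paper's route, by contrast, stays close to the exact one-dimensional identity and makes transparent that the operator-norm term enters only through a slab of relative width $\varepsilon $, i.e.\ only because Hyt\"{o}nen's theorem is unavailable in higher dimensions. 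One small caveat: your closing heuristic that the $N^{n/2}$ loss is `unavoidable' because cubes admit no exact integral representation of the monomial is not needed for the proof and should not be asserted as fact -- the paper shows one can come quite close with rectangles split into cubes plus a thin slab -- but this is commentary and does not affect the correctness of your argument.
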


\begin{proof}
We begin with the following geometric observation, similar to a construction
used in the recursive control of the nearby form in \cite{SaShUr12}. Let $R=%
\left[ 0,1\right) ^{n-1}\times \left[ 0,t\right) $ be a rectangle in $%
\mathbb{R}^{n}$ with $0<t<1$. Then given $0<\varepsilon <1$, there is a
positive integer $m\in \mathbb{N}$ and a dyadic number $t^{\ast }\equiv 
\frac{b}{2^{m}}$ with $0\leq b<2^{m}$, so that%
\begin{eqnarray}
R &=&E\overset{\cdot }{\cup }\left\{ \overset{\cdot }{\dbigcup }%
_{i=1}^{B}K_{i}\right\} ;  \label{decomp} \\
E &=&\left[ 0,1\right) ^{n-1}\times \left[ t^{\ast },t\right) \text{ with }%
\left\vert t-t^{\ast }\right\vert <\varepsilon ,  \notag \\
B &\leq &2^{nm-n-m+2},  \notag
\end{eqnarray}%
and where the $K_{i}$ are pairwise disjoint cubes inside $R$. To see (\ref%
{decomp}) we choose $m\in \mathbb{N}$ so that $\frac{1}{2^{m}}<\varepsilon $
and then let $b\in \mathbb{N}$ satisfy $2^{m}t-1\leq b<2^{m}t$. Then with $%
t^{\ast }=\frac{b}{2^{m}}$ we have $\left\vert t-t^{\ast }\right\vert <\frac{%
1}{2^{m}}<\varepsilon $. Now expand $t^{\ast }$ in binary form,%
\begin{equation*}
t^{\ast }=b_{1}\frac{1}{2}+b_{2}\frac{1}{4}+...+b_{m-1}\frac{1}{2^{m-1}},\ \
\ \ \ b_{k}\in \left\{ 0,1\right\} .
\end{equation*}%
Then for each $k$ with $b_{k}=1$ we decompose the rectangle 
\begin{equation*}
R_{k}\equiv \left[ 0,1\right) ^{n-1}\times \left[ b_{1}\frac{1}{2}+b_{2}%
\frac{1}{4}+...+b_{k-1}\frac{1}{2^{k-1}},b_{1}\frac{1}{2}+b_{2}\frac{1}{4}%
+...+b_{k-1}\frac{1}{2^{k-1}}+\frac{1}{2^{k}}\right)
\end{equation*}%
into $2^{\left( n-1\right) k}$ pairwise disjoint dyadic cubes of side length 
$\frac{1}{2^{k}}$. Then we take the collection of all such cubes, noting
that the number $B$ of such cubes is at most 
\begin{equation*}
\sum_{k=1}^{m-1}2^{\left( n-1\right) k}\leq 2\cdot 2^{\left( n-1\right)
\left( m-1\right) }=2^{nm-n-m+2},
\end{equation*}%
and label them as $\left\{ K_{i}\right\} _{i=1}^{B}$ with $B\leq
2^{nm-n-m+2} $. Finally we note that 
\begin{equation*}
\overset{\cdot }{\dbigcup }_{i=1}^{B}K_{i}=\overset{\cdot }{\dbigcup }_{k:\
b_{k}=1}R_{k}=\left[ 0,1\right) ^{n-1}\times \left[ 0,t^{\ast }\right) .
\end{equation*}%
This completes the proof of (\ref{decomp}). Note that we may arrange to have 
$m\approx \ln \frac{1}{\varepsilon }$.

We also have the same result for the complementary rectangle $R=\left[
0,1\right) ^{n-1}\times \left[ r,1\right) $ by simply reflecting about the
plane $y_{n}=\frac{1}{2}$ and taking $r=1-t$. It is in this complementary
form that we will use (\ref{decomp}).

Again we start by considering the full testing condition $\mathfrak{FT}%
_{T}^{1}$ over linear functions, and we begin by estimating%
\begin{equation*}
\left\Vert T_{\sigma }\left[ \mathbf{1}_{Q}\left( y\right) y_{j}\right]
\right\Vert _{L^{2}\left( \omega \right) }^{2},\ \ \ \ \ Q\in \mathcal{P}%
^{n},1\leq j\leq n.
\end{equation*}%
In order to reduce notational clutter in appealing to the complementary form
of the geometric observation above, we will suppose - without loss of
generality - that $Q=\left[ 0,1\right) ^{n}$ is the unit cube in $\mathbb{R}%
^{n}$, and that $j=n$. Then we have%
\begin{equation*}
\mathbf{1}_{\left[ 0,1\right) ^{n}}\left( y\right) y_{n}=\int_{0}^{1}\mathbf{%
1}_{\left[ 0,1\right) ^{n-1}\times \left[ r,1\right) }\left( y\right) dr,\ \
\ \ \ \text{for all }y\in \mathbb{R}^{n},
\end{equation*}%
and%
\begin{equation*}
T_{\sigma }\left( \mathbf{1}_{\left[ 0,1\right) ^{n}}\left( y\right)
y_{n}\right) \left( x\right) =T_{\sigma }\left( \int_{0}^{1}\mathbf{1}_{%
\left[ 0,1\right) ^{n-1}\times \left[ r,1\right) }\left( y\right) dr\right)
\left( x\right) =\int_{0}^{1}\left( T_{\sigma }\mathbf{1}_{\left[ 0,1\right)
^{n-1}\times \left[ r,1\right) }\right) \left( x\right) dr.
\end{equation*}%
The norm estimate is complicated by the lack of Hyt\"{o}nen's result in
higher dimensions, and we compensate by using the complementary form of the
geometric observation (\ref{decomp}), together with a simple probability
argument. Let $\left[ r,1\right) =\left[ r,r^{\ast }\right) \overset{\cdot }{%
\cup }\left[ r^{\ast },1\right) $ and write%
\begin{eqnarray*}
&&\left\Vert T_{\sigma }\mathbf{1}_{\left[ 0,1\right) ^{n-1}\times \left[
r,1\right) }\right\Vert _{L^{2}\left( \omega \right) }^{2}=\int \left\vert
T_{\sigma }\left\{ \mathbf{1}_{\left[ 0,1\right) ^{n-1}\times \left[
r,r^{\ast }\right) }+\mathbf{1}_{\left[ 0,1\right) ^{n-1}\times \left[
r^{\ast },1\right) }\right\} \left( x\right) \right\vert ^{2}d\omega \left(
x\right) \\
&=&\int \left\vert T_{\sigma }\left\{ \mathbf{1}_{\left[ 0,1\right)
^{n-1}\times \left[ r,r^{\ast }\right) }+\sum_{i=1}^{B}\mathbf{1}%
_{K_{i}}\right\} \left( x\right) \right\vert ^{2}d\omega \left( x\right) \\
&\lesssim &\int \left\vert T_{\sigma }\mathbf{1}_{\left[ 0,1\right)
^{n-1}\times \left[ r,r^{\ast }\right) }\left( x\right) \right\vert
^{2}d\omega \left( x\right) +\sum_{i=1}^{B}\int \left\vert T_{\sigma }%
\mathbf{1}_{K_{i}}\left( x\right) \right\vert ^{2}d\omega \left( x\right) \\
&\leq &\int \left\vert T_{\sigma }\mathbf{1}_{\left[ 0,1\right) ^{n-1}\times %
\left[ r,r^{\ast }\right) }\left( x\right) \right\vert ^{2}d\omega \left(
x\right) +\left( \mathfrak{FT}_{T}\right) ^{2}\sum_{i=1}^{B}\left\vert
K_{i}\right\vert _{\sigma }.
\end{eqnarray*}%
First, we apply a simple probability argument to the integral over $r$ of
the last integral above by pigeonholing the values taken by $r^{\ast }\in
\left\{ \frac{b}{2^{m}}\right\} _{0\leq b<2^{m}}$: 
\begin{eqnarray*}
&&\int_{0}^{1}\int \left\vert T_{\sigma }\mathbf{1}_{\left[ 0,1\right)
^{n-1}\times \left[ r,r^{\ast }\right) }\left( x\right) \right\vert
^{2}d\omega \left( x\right) dr\leq \mathfrak{N}_{T}\left( \sigma ,\omega
\right) ^{2}\int_{0}^{1}\left\{ \int_{\left[ 0,1\right) ^{n-1}\times \left[
r,r^{\ast }\right) }d\sigma \right\} dr \\
&=&\mathfrak{N}_{T}\left( \sigma ,\omega \right) ^{2}\sum_{0<b\leq
2^{m}}\int_{\frac{b-1}{2^{m}}}^{\frac{b}{2^{m}}}\left\{ \int_{\left[
0,1\right) ^{n-1}\times \left[ r,\frac{b}{2^{m}}\right) }d\sigma \right\}
dr\leq \mathfrak{N}_{T}\left( \sigma ,\omega \right) ^{2}\int_{\left[
0,1\right) ^{n}}\left\{ \int_{y_{n}-\varepsilon }^{y_{n}}dr\right\} d\sigma
\left( y_{1},...,y_{n}\right) \\
&\leq &\mathfrak{N}_{T}\left( \sigma ,\omega \right) ^{2}\int_{\left[
0,1\right) ^{n}}\varepsilon d\sigma \left( y_{1},...,y_{n}\right)
=\varepsilon \mathfrak{N}_{T}\left( \sigma ,\omega \right) ^{2}\left\vert %
\left[ 0,1\right) ^{n}\right\vert _{\sigma }\ ,
\end{eqnarray*}%
since $\frac{b-1}{2^{m}}\leq r\leq y_{n}<\frac{b}{2^{m}}$ implies $%
y_{n}-\varepsilon <y_{n}-\frac{1}{2^{m}}\leq r\leq y_{n}$.

Combining estimates, and setting $R_{r}\equiv \left[ 0,1\right) ^{n-1}\times %
\left[ r,1\right) $ for convenience, we obtain%
\begin{eqnarray*}
&&\left\Vert T_{\sigma }\left[ \mathbf{1}_{R_{r}}\left( y\right) y_{n}\right]
\right\Vert _{L^{2}\left( \omega \right) }=\left\Vert T_{\sigma }\left[
\int_{0}^{1}\mathbf{1}_{R_{r}}\left( y\right) dr\right] \right\Vert
_{L^{2}\left( \omega \right) } \\
&\leq &\int_{0}^{1}\left\Vert T_{\sigma }\left[ \mathbf{1}_{R_{r}}\left(
y\right) \right] \right\Vert _{L^{2}\left( \omega \right) }dr\leq \mathfrak{%
FT}_{T}\left( \sigma ,\omega \right) \int_{0}^{1}\sqrt{\left\vert
R_{r}\right\vert _{\sigma }}dr+\varepsilon \mathfrak{N}_{T}\left( \sigma
,\omega \right) \left\vert \left[ 0,1\right) ^{n}\right\vert _{\sigma }\ ,
\end{eqnarray*}%
where%
\begin{eqnarray*}
\int_{0}^{1}\sqrt{\left\vert R_{r}\right\vert _{\sigma }}dr &\leq &\sqrt{%
\int_{0}^{1}\left\vert \left[ r,b\right) \right\vert _{\sigma }\frac{dr}{b-a}%
}=\sqrt{\int_{0}^{1}\int_{\left[ 0,1\right) ^{n-1}\times \left[ r,1\right)
}d\sigma \left( y\right) dr} \\
&=&\sqrt{\int_{\left[ 0,1\right) ^{n}}\int_{\left[ 0,y_{n}\right) }drd\sigma
\left( y\right) }=\sqrt{\int_{\left[ 0,1\right) ^{n}}y_{n}d\sigma \left(
y\right) }\ .
\end{eqnarray*}%
Noting that $\sqrt{\int_{\left[ 0,1\right) ^{n}}y_{n}d\sigma \left( y\right) 
}\leq \left\vert \left[ 0,1\right) ^{n}\right\vert _{\sigma }$, that the
same estimates hold for $y_{j}$ in place of $y_{n}$, and finally that there
are appropriate analogues of these estimates for all cubes $Q\in \mathcal{P}%
^{n}$ in place of $\left[ 0,1\right) ^{n}$, we see that 
\begin{equation*}
\mathfrak{FT}_{T}^{\left( 1\right) }\left( \sigma ,\omega \right) \leq
C_{m,0}\mathfrak{FT}\left( \sigma ,\omega \right) +\varepsilon \mathfrak{N}%
_{T}\left( \sigma ,\omega \right) \mathfrak{.}
\end{equation*}

Similarly, for each $i<n$ we can consider the monomial $y_{i}y_{n}$, and
obtain from the above argument with $y_{i}$ included in the integrand, that%
\begin{equation}
\left\Vert T_{\sigma }\left[ \mathbf{1}_{R_{r}}\left( y\right) y_{i}y_{n}%
\right] \right\Vert _{L^{2}\left( \omega \right) }\lesssim \sqrt{\int
\left\vert T_{\sigma }\left( \mathbf{1}_{\left[ 0,1\right) ^{n-1}\times %
\left[ r,r^{\ast }\right) }\left( y\right) y_{i}\right) \left( x\right)
\right\vert ^{2}d\omega \left( x\right) }+\mathfrak{FT}_{T}^{\left( 1\right)
}\ \left\vert \left[ 0,1\right) ^{n}\right\vert _{\sigma }\ .
\label{i less than n}
\end{equation}%
For the monomial $y_{n}^{2}$ we use the identity 
\begin{equation*}
\mathbf{1}_{\left[ 0,1\right) ^{n}}\left( y\right) y_{n}^{2}=\int_{0}^{1}%
\mathbf{1}_{\left[ 0,1\right) ^{n-1}\times \left[ r,1\right) }\left(
y\right) 2\left( y_{n}-r\right) dr,\ \ \ \ \ \text{for all }y\in \mathbb{R}%
^{n},
\end{equation*}%
to obtain%
\begin{equation*}
\left\Vert T_{\sigma }\left[ \mathbf{1}_{R_{r}}\left( y\right) y_{n}^{2}%
\right] \right\Vert _{L^{2}\left( \omega \right) }\lesssim \sqrt{\int
\left\vert T_{\sigma }\left( \mathbf{1}_{\left[ 0,1\right) ^{n-1}\times %
\left[ r,r^{\ast }\right) }\left( y\right) \left( y_{n}-r\right) \right)
\left( x\right) \right\vert ^{2}d\omega \left( x\right) }+\mathfrak{FT}%
_{T}^{\left( 1\right) }\ \left\vert \left[ 0,1\right) ^{n}\right\vert
_{\sigma }\ .
\end{equation*}%
Then in either case, integrating in $r$, using the simple probability
argument above, and finally using the appropriate analogues of these
estimates for all cubes $Q\in \mathcal{P}^{n}$ in place of $\left[
0,1\right) ^{n}$, we obtain 
\begin{equation*}
\mathfrak{FT}_{T}^{\left( 2\right) }\left( \sigma ,\omega \right) \leq
C_{m,1}\mathfrak{FT}_{T}^{\left( 1\right) }\left( \sigma ,\omega \right)
+\varepsilon \mathfrak{N}_{T}\left( \sigma ,\omega \right) \mathfrak{.}
\end{equation*}

Continuing in this way, using the identity%
\begin{equation*}
\mathbf{1}_{\left[ 0,1\right) ^{n}}\left( y\right) y^{\beta }=\int_{0}^{1}%
\mathbf{1}_{\left[ 0,1\right) ^{n-1}\times \left[ r,1\right) }\left(
y\right) \left( y_{1}^{\beta _{1}}...y_{n-1}^{\beta _{n-1}}\right) \left(
2\beta _{n}\left( y_{n}-r\right) ^{\beta _{n}-1}\right) dr,\ \ \ \ \ \text{%
for all }y\in \mathbb{R}^{n},
\end{equation*}%
yields the inequality%
\begin{equation*}
\mathfrak{FT}_{T}^{\left( \kappa \right) }\left( \sigma ,\omega \right) \leq
C_{m,\kappa -1}\mathfrak{FT}_{T}^{\left( \kappa -1\right) }\left( \sigma
,\omega \right) +\varepsilon \mathfrak{N}_{T}\left( \sigma ,\omega \right)
,\ \ \ \ \ \kappa \in \mathbb{N}.
\end{equation*}%
Iteration then gives%
\begin{eqnarray*}
\mathfrak{FT}_{T}^{\left( \kappa \right) }\left( \sigma ,\omega \right)
&\leq &\varepsilon \mathfrak{N}_{T}\left( \sigma ,\omega \right)
+C_{m,\kappa -1}\mathfrak{FT}_{T}^{\left( \kappa -1\right) }\left( \sigma
,\omega \right) \\
&\leq &\varepsilon \mathfrak{N}_{T}\left( \sigma ,\omega \right)
+C_{m,\kappa -1}\left\{ \varepsilon \mathfrak{N}_{T}\left( \sigma ,\omega
\right) +C_{m,\kappa -2}\mathfrak{FT}_{T}^{\left( \kappa -2\right) }\left(
\sigma ,\omega \right) \right\} \\
&&\vdots \\
&\leq &\varepsilon \left\{ 1+C_{m,\kappa -1}+C_{m,\kappa -1}C_{m,\kappa
-2}+...+C_{m,\kappa -1}C_{m,\kappa -2}...C_{m,0}\right\} \mathfrak{N}%
_{T}\left( \sigma ,\omega \right) \\
&&+\left\{ C_{m,\kappa -1}C_{m,\kappa -2}+...+C_{m,\kappa -1}C_{m,\kappa
-2}...C_{0m,}\right\} \mathfrak{FT}_{T}\left( \sigma ,\omega \right) \\
&=&\varepsilon A\left( \kappa ,\varepsilon \right) \mathfrak{N}_{T}\left(
\sigma ,\omega \right) +B\left( \kappa ,\varepsilon \right) \mathfrak{FT}%
_{T}\left( \sigma ,\omega \right) ,
\end{eqnarray*}%
where the constants $A\left( \kappa ,\varepsilon \right) $ and $B\left(
\kappa ,\varepsilon \right) $ are independent of the operator norm $%
\mathfrak{N}_{T}\left( \sigma ,\omega \right) $. Here we have taken $%
m\approx \log _{2}\frac{1}{\varepsilon }$. This completes the proof of
Theorem \ref{Tp control by T1}.
\end{proof}

We have already pointed out in dimension $n=1$, the equivalence of full
testing with the usual $1$-testing in the presence of one-tailed Muckenhoupt
conditions. In higher dimensions the same is true for at least doubling
measures. For this we use a quantitative expression of the fact that
doubling measures don't charge the boundaries of cubes \cite[see e.g. 8.6
(b) on page 40]{Ste2}.

\begin{lemma}
\label{boundary doubling}Suppose $\sigma $ is a doubling measure on $\mathbb{%
R}^{n}$ and that $Q\in \mathcal{P}^{n}$. Then for $0<\delta <1$ we have%
\begin{equation*}
\left\vert Q\setminus \left( 1-\delta \right) Q\right\vert _{\sigma }\leq 
\frac{C}{\ln \frac{1}{\delta }}\left\vert Q\right\vert _{\sigma }\ .
\end{equation*}
\end{lemma}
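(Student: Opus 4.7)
The plan is to combine a logarithmic pigeonhole on concentric subcubes with a doubling-based covering argument. For $\delta \geq 1/4$ the estimate is trivial (since $1/\ln \frac{1}{\delta}$ is bounded below by a positive constant), so assume $\delta \in (0, 1/4]$ and set $N := \lfloor \log_2 \frac{1}{2\delta}\rfloor$, so that $N \geq 1$, $N \sim \ln \frac{1}{\delta}$, and $2^N\delta \leq \tfrac{1}{2}$. Introduce the concentric dyadic subcubes $Q_k := (1 - 2^k \delta) Q$ for $k = 0, 1, \dots, N$, so $Q_0 = (1-\delta) Q$ and $\tfrac{1}{2} Q \subset Q_N$, and denote the associated shells by $T_k := Q_{k-1} \setminus Q_k$. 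The crucial feature is their \emph{self-similarity}: the shell $T_k$ has thickness $\sim 2^{k-2}\delta\,\ell(Q)$ and sits at exactly this distance from $\partial Q$.

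First, a log pigeonhole locates one small shell. Setting $a_k := |Q_k|_\sigma$, the inclusion $Q \subset 2 Q_N$ combined with doubling gives $a_0/a_N \leq |Q|_\sigma / a_N \leq C_{\mathrm{doub}}$, and telescoping yields
\[
\sum_{k=1}^N \log \frac{a_{k-1}}{a_k} = \log \frac{a_0}{a_N} \leq \log C_{\mathrm{doub}}.
\]
This is a sum of $N$ non-negative terms bounded by a constant, so pigeonholing gives some $k^* \in \{1, \dots, N\}$ with $\log(a_{k^*-1}/a_{k^*}) \leq (\log C_{\mathrm{doub}})/N$, and hence
\[
|T_{k^*}|_\sigma \leq \bigl(1 - C_{\mathrm{doub}}^{-1/N}\bigr)\, |Q|_\sigma \lesssim \frac{|Q|_\sigma}{\ln \frac{1}{\delta}}.
\]

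The second step transfers this smallness to the boundary shell $B_\delta := Q \setminus (1-\delta)Q$ via a covering argument yielding $|B_\delta|_\sigma \lesssim |T_{k^*}|_\sigma$ with constant depending only on $n$ and $C_{\mathrm{doub}}$. For each face $F$ of $Q$, partition the face-portion of $T_{k^*}$ adjacent to $F$ into cubes $\{K_\alpha\}$ of side length equal to the shell thickness. Since $T_{k^*}$ sits at distance comparable to its thickness from $\partial Q$, the concentric isotropic dilate $4 K_\alpha$ pokes through $F$ and covers the piece of $B_\delta$ lying directly above $K_\alpha$ in the normal direction. By doubling, $|4 K_\alpha|_\sigma \leq C_{\mathrm{doub}}^2 |K_\alpha|_\sigma$; summing over $\alpha$ (the $\{4K_\alpha\}$ cover the face-portion of $B_\delta$ with overlap at most $4^{n-1}$, while the disjoint $\{K_\alpha\}$ tile the face-portion of $T_{k^*}$) and then summing over the $2n$ faces yields $|B_\delta|_\sigma \leq C_n\, C_{\mathrm{doub}}^2\, |T_{k^*}|_\sigma$. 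The main obstacle is precisely this covering step: the dilation factor must be chosen \emph{uniformly} in $k^*$, since the pigeonhole provides no control over which shell is small. The self-similar geometry of the $T_k$ — thickness equal to distance from $\partial Q$ — is exactly what allows a universal dilation factor of $4$; with constant-thickness shells the required dilation would be $\sim 1/\delta$, and the resulting $C_{\mathrm{doub}}^{\log_2(1/\delta)}$ factor would destroy the estimate.
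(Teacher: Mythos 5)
Your proof is correct and is essentially the paper's argument: both stack $\sim\log\frac{1}{\delta}$ pairwise disjoint shells inside $Q$ whose thickness is comparable to their distance from $\partial Q$, and use doubling at each shell's own scale (triples of dyadic children in the paper, $4$-fold dilates of a tiling of the shell in yours) to compare the boundary frame $Q\setminus(1-\delta)Q$ with an interior shell. The telescoping/pigeonhole step is only a reorganization: your covering bound $\left\vert Q\setminus(1-\delta)Q\right\vert_{\sigma}\lesssim\left\vert T_{k}\right\vert_{\sigma}$ holds for every $k$, and summing over the disjoint shells yields the lemma directly, which is exactly the paper's bookkeeping.
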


\begin{proof}
Let $\delta =2^{-m}$. Denote by $\mathfrak{C}^{\left( m\right) }\left(
Q\right) $ the set of $m^{th}$ generation dyadic children of $Q$ , so that
each $I\in \mathfrak{C}^{\left( m\right) }\left( Q\right) $ has side length $%
\ell \left( I\right) =2^{-m}\ell \left( Q\right) $, and define the
collections%
\begin{eqnarray*}
\mathfrak{G}^{\left( m\right) }\left( Q\right) &\equiv &\left\{ I\in 
\mathfrak{C}^{\left( m\right) }\left( Q\right) :I\subset Q\text{ and }%
\partial I\cap \partial Q\neq \emptyset \right\} , \\
\mathfrak{H}^{\left( m\right) }\left( Q\right) &\equiv &\left\{ I\in 
\mathfrak{C}^{\left( m\right) }\left( Q\right) :3I\subset Q\text{ and }%
\partial \left( 3I\right) \cap \partial Q\neq \emptyset \right\} .
\end{eqnarray*}%
Then 
\begin{equation*}
Q\setminus \left( 1-\delta \right) Q=\mathfrak{G}^{\left( m\right) }\left(
Q\right) \text{ and }\left( 1-\delta \right) Q=\overset{\cdot }{\dbigcup }%
_{k=2}^{m}\mathfrak{G}^{\left( k\right) }\left( Q\right) .
\end{equation*}%
From the doubling condition we have $\left\vert 3I\right\vert _{\sigma }\leq
D\left\vert I\right\vert _{\sigma }$ for all cubes $I$, and so 
\begin{eqnarray*}
\left\vert \mathfrak{H}^{\left( k\right) }\left( Q\right) \right\vert
_{\sigma } &=&\sum_{I\in \mathfrak{H}^{\left( k\right) }\left( Q\right)
}\left\vert I\right\vert _{\sigma }\geq \sum_{I\in \mathfrak{H}^{\left(
k\right) }\left( Q\right) }\frac{1}{D}\left\vert 3I\right\vert _{\sigma }=%
\frac{1}{D}\int \left( \sum_{I\in \mathfrak{H}^{\left( k\right) }\left(
Q\right) }\mathbf{1}_{3I}\right) d\sigma \\
&\geq &\frac{1}{D}\int \left( \sum_{I\in \mathfrak{G}^{\left( k\right)
}\left( Q\right) }\mathbf{1}_{I}\right) d\sigma =\frac{1}{D}\left\vert 
\mathfrak{G}^{\left( k\right) }\left( Q\right) \right\vert _{\sigma }\geq 
\frac{1}{D}\left\vert \mathfrak{G}^{\left( m\right) }\left( Q\right)
\right\vert _{\sigma }=\frac{1}{D}\left\vert Q\setminus \left( 1-\delta
\right) Q\right\vert _{\sigma }\ .
\end{eqnarray*}%
Thus we have%
\begin{equation*}
\left\vert Q\right\vert _{\sigma }\geq \sum_{k=2}^{m}\left\vert \mathfrak{H}%
^{\left( k\right) }\left( Q\right) \right\vert _{\sigma }\geq \frac{m-1}{D}%
\left\vert Q\setminus \left( 1-\delta \right) Q\right\vert _{\sigma }\ ,
\end{equation*}%
which proves the lemma.
\end{proof}

\begin{proposition}
\label{full control for doub}Suppose that $\sigma $ and $\omega $ are
locally finite positive Borel measures on $\mathbb{R}^{n}$, and that $\sigma 
$ is doubling. Then for $0<\varepsilon <1$ there is a positive constant $%
C\left( \varepsilon \right) $ such that%
\begin{equation*}
\mathfrak{FT}_{T}\left( \sigma ,\omega \right) \leq \mathfrak{T}_{T}\left(
\sigma ,\omega \right) +C\left( \varepsilon \right) \mathcal{A}_{2}^{\alpha
}\left( \sigma ,\omega \right) +\varepsilon \mathfrak{N}_{T}\left( \sigma
,\omega \right) \ .
\end{equation*}
\end{proposition}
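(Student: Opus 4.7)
The plan is to split the target integral $\int_{\mathbb{R}^n}|T_\sigma\mathbf{1}_Q|^2\,d\omega$ into four contributions---over $Q$, over a thin outer shell $Q^\ast\setminus Q$ with $Q^\ast:=(1+\delta)Q$, over the middle annulus $2Q\setminus Q^\ast$, and over the far field $(2Q)^c$---and to fix the small parameter $\delta=\delta(\varepsilon)$ only at the end. Over $Q$ itself, the ordinary cube-testing constant $\mathfrak{T}_T$ delivers $\int_Q|T_\sigma\mathbf{1}_Q|^2\,d\omega \le \mathfrak{T}_T^2\,|Q|_\sigma$ with no loss.

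On the shell $Q^\ast\setminus Q$ I would exploit the identity
$$T_\sigma\mathbf{1}_Q \;=\; T_\sigma\mathbf{1}_{Q^\ast}\,-\,T_\sigma\mathbf{1}_{Q^\ast\setminus Q},$$
so that one piece is controlled by cube testing on $Q^\ast$ (after using the doubling of $\sigma$ to replace $|Q^\ast|_\sigma$ by a constant multiple of $|Q|_\sigma$), while the other is controlled by the operator norm times $|Q^\ast\setminus Q|_\sigma$. This is precisely where Lemma \ref{boundary doubling} is used: applied to $Q^\ast$ with parameter $\delta/(1+\delta)\approx\delta$ it yields $|Q^\ast\setminus Q|_\sigma \le \frac{C}{\ln(1/\delta)}|Q|_\sigma$, so the operator-norm contribution from the shell is bounded by $\frac{C}{\ln(1/\delta)}\mathfrak{N}_T^2\,|Q|_\sigma$, which is forced below $\varepsilon^2\mathfrak{N}_T^2\,|Q|_\sigma$ by choosing $\delta$ small enough in $\varepsilon$.

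For $x\in 2Q\setminus Q^\ast$, the kernel size bound $|K^\alpha(x,y)|\le C|x-y|^{\alpha-n}$ together with $|x-y|\gtrsim \delta\ell(Q)$ for $y\in Q$ gives $|T_\sigma\mathbf{1}_Q(x)|\le C|Q|_\sigma(\delta\ell(Q))^{\alpha-n}$, so this annulus contributes at most $C(\delta)\,|Q|_\sigma^{\,2}\,|2Q|_\omega\,\ell(Q)^{2(\alpha-n)}$; recognizing $\mathcal{P}^\alpha(Q,\omega)\gtrsim |2Q|_\omega/\ell(Q)^{n-\alpha}$ turns this into $C(\delta)\,\mathcal{A}_2^{\alpha,\ast}\,|Q|_\sigma$. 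The far field $(2Q)^c$ is treated by the standard dyadic-annulus decomposition: integrating the kernel bound over $2^{k+1}Q\setminus 2^kQ$ and summing in $k$ produces the Poisson tail, yielding another term $\lesssim \mathcal{A}_2^{\alpha,\ast}\,|Q|_\sigma$.

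Collecting the four estimates, dividing by $|Q|_\sigma$, and taking the sup over $Q$ produces a schematic inequality of the form
$$\mathfrak{FT}_T^{\,2} \;\lesssim\; \mathfrak{T}_T^{\,2} \;+\; C(\delta)\,\mathcal{A}_2^\alpha \;+\; \tfrac{C}{\ln(1/\delta)}\,\mathfrak{N}_T^{\,2},$$
and the final choice $\delta=\delta(\varepsilon)$ converts the last term into $\varepsilon^2\mathfrak{N}_T^{\,2}$ while absorbing the constants in $C(\delta)\,\mathcal{A}_2^\alpha$ into $C(\varepsilon)\,\mathcal{A}_2^\alpha$. The main obstacle is the thin shell $Q^\ast\setminus Q$: on this region the $\sigma$-support abuts the $\omega$-domain of integration, so neither cube testing nor far-field kernel decay applies, and only the quantitative non-concentration of the doubling measure $\sigma$ at cube boundaries, supplied by Lemma \ref{boundary doubling}, prevents this shell from costing a full power of $\mathfrak{N}_T$.
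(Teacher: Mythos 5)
Your argument is correct, and it rests on exactly the same three ingredients as the paper's proof: ordinary cube testing for the bulk, Lemma \ref{boundary doubling} to make a thin boundary shell cheap in $\sigma$-measure so that the operator norm $\mathfrak{N}_{T}$ is paid only there, and kernel decay summed into a Poisson tail (hence a tailed Muckenhoupt constant) for the well-separated region, with $\delta=\delta(\varepsilon)$ fixed last. The structural difference is the direction of the shell: you dilate outward to $Q^{\ast}=(1+\delta)Q$ and split the integration region into four pieces, handling the shell by writing $\mathbf{1}_{Q}=\mathbf{1}_{Q^{\ast}}-\mathbf{1}_{Q^{\ast}\setminus Q}$, whereas the paper shrinks inward, splitting the test function as $\mathbf{1}_{Q}=\mathbf{1}_{(1-\delta)Q}+\mathbf{1}_{Q\setminus(1-\delta)Q}$ on the region $\mathbb{R}^{n}\setminus Q$ and applying Lemma \ref{boundary doubling} to $Q$ itself. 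The inward version buys a coefficient of exactly $1$ on $\mathfrak{T}_{T}$, since testing is only ever applied to $Q$; your outer-shell step tests on $Q^{\ast}$ and then uses doubling to compare $\left\vert Q^{\ast}\right\vert _{\sigma }\lesssim \left\vert Q\right\vert _{\sigma }$, so you obtain the formally weaker bound $\mathfrak{FT}_{T}\leq C_{\limfunc{doub}}'\,\mathfrak{T}_{T}+C(\varepsilon )\mathcal{A}_{2}+\varepsilon \mathfrak{N}_{T}$ --- entirely harmless for the absorption argument in Theorem \ref{A infinity theorem}, but not literally the displayed inequality. On the other hand your bookkeeping is more precise about which tail condition the separated regions actually require, namely $\mathcal{A}_{2}^{\alpha ,\ast }\left( \sigma ,\omega \right) $ with the Poisson tail on $\omega$ (the function is supported in $Q$ with respect to $\sigma$ while the $\omega$-integration runs to infinity); the paper's own estimate of $\int_{\mathbb{R}^{n}\setminus Q}\left\vert T_{\sigma }\mathbf{1}_{(1-\delta )Q}\right\vert ^{2}d\omega $ implicitly uses the same starred constant even though the statement writes $\mathcal{A}_{2}^{\alpha }$, and since both one-tailed conditions are assumed in the application, nothing is lost either way.
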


\begin{proof}
Let $\delta >0$ be defined by the equation $\varepsilon =\frac{C}{\ln \frac{1%
}{\delta }}$, i.e. $\delta =e^{-\frac{C}{\varepsilon }}$. Then we write%
\begin{eqnarray*}
\int_{\mathbb{R}^{n}}\left\vert T_{\sigma }\mathbf{1}_{Q}\right\vert
^{2}d\omega &=&\int_{Q}\left\vert T_{\sigma }\mathbf{1}_{Q}\right\vert
^{2}d\omega +\int_{\mathbb{R}^{n}\setminus Q}\left\vert T_{\sigma }\mathbf{1}%
_{\left( 1-\delta \right) Q}+T_{\sigma }\mathbf{1}_{Q\setminus \left(
1-\delta \right) Q}\right\vert ^{2}d\omega \\
&\leq &\mathfrak{T}_{T}\left( \sigma ,\omega \right) ^{2}\left\vert
Q\right\vert _{\sigma }+2\int_{\mathbb{R}^{n}\setminus Q}\left\vert
T_{\sigma }\mathbf{1}_{\left( 1-\delta \right) Q}\right\vert ^{2}d\omega
+2\int_{\mathbb{R}^{n}\setminus Q}\left\vert T_{\sigma }\mathbf{1}%
_{Q\setminus \left( 1-\delta \right) Q}\right\vert ^{2}d\omega \\
&\leq &\mathfrak{T}_{T}\left( \sigma ,\omega \right) ^{2}\left\vert
Q\right\vert _{\sigma }+C\frac{1}{\delta }\mathcal{A}_{2}^{\alpha }\left(
\sigma ,\omega \right) \left\vert Q\right\vert _{\sigma }+2\mathfrak{N}%
_{T}^{2}\left( \sigma ,\omega \right) \left\vert Q\setminus \left( 1-\delta
\right) Q\right\vert _{\sigma }\ .
\end{eqnarray*}%
Now invoke Lemma \ref{boundary doubling} to obtain%
\begin{equation*}
\int_{\mathbb{R}^{n}}\left\vert T_{\sigma }\mathbf{1}_{Q}\right\vert
^{2}d\omega \leq \mathfrak{T}_{T}\left( \sigma ,\omega \right)
^{2}\left\vert Q\right\vert _{\sigma }+C\frac{1}{\delta }\mathcal{A}%
_{2}^{\alpha }\left( \sigma ,\omega \right) \left\vert Q\right\vert _{\sigma
}+\varepsilon \mathfrak{N}_{T}^{2}\left( \sigma ,\omega \right) \left\vert
Q\right\vert _{\sigma }\ ,
\end{equation*}%
with $\varepsilon =\frac{2C}{\ln \frac{1}{\delta }}$.
\end{proof}

\section{A $T1$ theorem for doubling weights when one weight is $A_{\infty }$%
}

The following $T1$ theorem provides a Cube Testing extension of the $T1$
theorem of David and Journ\'{e} \cite{DaJo} to a pair of weights with one
doubling and the other $A_{\infty }$ (and provided the operator is bounded
on unweighted $L^{2}\left( \mathbb{R}^{n}\right) $ when $\alpha =0$).

\begin{theorem}
\label{A infinity theorem}Suppose $0\leq \alpha <n$, and $\kappa _{1},\kappa
_{2}\in \mathbb{N}$ and $0<\delta <1$. Let $T^{\alpha }$ be an $\alpha $%
-fractional Calder\'{o}n-Zygmund singular integral operator on $\mathbb{R}%
^{n}$ with a standard $\left( \kappa _{1}+\delta ,\kappa _{2}+\delta \right) 
$-smooth $\alpha $-fractional kernel $K^{\alpha }$, and when $\alpha =0$,
suppose that $T^{0}$ is bounded on unweighted $L^{2}\left( \mathbb{R}%
^{n}\right) $. Assume that $\sigma $ and $\omega $ are locally finite
positive Borel doubling measures on $\mathbb{R}^{n}$ with doubling exponents 
$\theta _{1}$ and $\theta _{2}$ respectively satisfying $\kappa _{1}>\theta
_{1}+\alpha -n$ and $\kappa _{2}>\theta _{2}+\alpha -n$, and that one of the
measures is an $A_{\infty }$ weight. Set 
\begin{equation*}
T_{\sigma }^{\alpha }f=T^{\alpha }\left( f\sigma \right)
\end{equation*}%
for any smooth truncation of $T^{\alpha }$.\newline
Then the operator $T_{\sigma }^{\alpha }$ is bounded from $L^{2}\left(
\sigma \right) $ to $L^{2}\left( \omega \right) $, i.e. 
\begin{equation}
\left\Vert T_{\sigma }^{\alpha }f\right\Vert _{L^{2}\left( \omega \right)
}\leq \mathfrak{N}_{T^{\alpha }}\left( \sigma ,\omega \right) \left\Vert
f\right\Vert _{L^{2}\left( \sigma \right) },  \label{strong type}
\end{equation}%
uniformly in smooth truncations of $T^{\alpha }$, provided that the
one-tailed fractional conditions (\ref{one-sided}) of Muckenhoupt hold, and
the two dual Cube Testing conditions (\ref{def Kappa polynomial}) hold with $%
\kappa =1$. Moreover we have%
\begin{equation}
\mathfrak{N}_{T^{\alpha }}\left( \sigma ,\omega \right) \leq C\left( \sqrt{%
\mathcal{A}_{2}^{\alpha }\left( \sigma ,\omega \right) +\mathcal{A}%
_{2}^{\alpha }\left( \omega ,\sigma \right) }+\mathfrak{T}_{T^{\alpha
}}\left( \sigma ,\omega \right) +\mathfrak{T}_{\left( T^{\alpha }\right)
^{\ast }}\left( \omega ,\sigma \right) \right) ,  \label{more}
\end{equation}%
where the constant $C$ depends on $C_{CZ}$ in (\ref{sizeandsmoothness'}) and
the doubling parameters $\left( \beta _{1},\gamma _{1}\right) ,\left( \beta
_{2},\gamma _{2}\right) $ of the weights $\sigma $ and $\omega $, as well as
on the $A_{\infty }$ parameters of one of the weights. If $T^{\alpha }$ is
elliptic, and strongly elliptic if $\frac{n}{2}\leq \alpha <n$, the
inequality can be reversed.
\end{theorem}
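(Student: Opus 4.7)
The plan is to combine the previous two papers in the series (arXiv:1906.05602 and arXiv:1907.07571), which establish a $T1$ theorem for doubling weight pairs using the $\kappa$-cube polynomial testing conditions (\ref{def Kappa polynomial}) once $\kappa$ is taken larger than $\theta_i+\alpha-n$, with the reduction results already proved in this paper. First I would invoke that polynomial $T1$ theorem in the form
\begin{equation*}
\mathfrak{N}_{T^{\alpha}}(\sigma,\omega)\leq C\Bigl(\sqrt{\mathcal{A}_{2}^{\alpha}(\sigma,\omega)+\mathcal{A}_{2}^{\alpha,\ast}(\omega,\sigma)}+\mathfrak{T}_{T^{\alpha}}^{(\kappa)}(\sigma,\omega)+\mathfrak{T}_{(T^{\alpha})^{\ast}}^{(\kappa)}(\omega,\sigma)\Bigr),
\end{equation*}
valid for $\kappa=\max(\kappa_1,\kappa_2)$ provided both weights are doubling and one is $A_{\infty}$ (the $A_\infty$ hypothesis is what lets the earlier papers verify the structural/energy-type side conditions that were needed there). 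When $\alpha=0$ the $L^2(\mathbb{R}^n)$ boundedness of $T^0$ is used in that input to get the classical weak boundedness needed for the paraproduct piece.

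Next, I would turn the polynomial testing on the right into the $\kappa=1$ testing. Because $\mathfrak{T}_{T^{\alpha}}^{(\kappa)}\leq\mathfrak{FT}_{T^{\alpha}}^{(\kappa)}$, Theorem \ref{Tp control by T1} applied to both $T^{\alpha}$ and $(T^{\alpha})^{\ast}$ yields, for every $\varepsilon>0$,
\begin{equation*}
\mathfrak{T}_{T^{\alpha}}^{(\kappa)}(\sigma,\omega)\leq C(\kappa,\varepsilon)\,\mathfrak{FT}_{T^{\alpha}}(\sigma,\omega)+\varepsilon\,\mathfrak{N}_{T^{\alpha}}(\sigma,\omega),
\end{equation*}
and symmetrically for the dual. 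Then Proposition \ref{full control for doub}, applicable because \emph{both} $\sigma$ and $\omega$ are doubling (so it can be used for $T^{\alpha}$ with pair $(\sigma,\omega)$ and for $(T^{\alpha})^{\ast}$ with pair $(\omega,\sigma)$), converts $\mathfrak{FT}_{T^{\alpha}}$ into the ordinary cube testing $\mathfrak{T}_{T^{\alpha}}$ at the price of an extra $C(\varepsilon)\mathcal{A}_{2}^{\alpha}$ term and another $\varepsilon\mathfrak{N}_{T^{\alpha}}$ term. Chaining these two reductions gives
\begin{equation*}
\mathfrak{T}_{T^{\alpha}}^{(\kappa)}(\sigma,\omega)\leq C(\kappa,\varepsilon)\bigl[\mathfrak{T}_{T^{\alpha}}(\sigma,\omega)+\mathcal{A}_{2}^{\alpha}(\sigma,\omega)\bigr]+C(\kappa)\varepsilon\,\mathfrak{N}_{T^{\alpha}}(\sigma,\omega),
\end{equation*}
and similarly for the dual.

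Substituting back into the polynomial $T1$ estimate, the right hand side becomes
\begin{equation*}
C\Bigl(\sqrt{\mathcal{A}_{2}^{\alpha}+\mathcal{A}_{2}^{\alpha,\ast}}+\mathfrak{T}_{T^{\alpha}}+\mathfrak{T}_{(T^{\alpha})^{\ast}}\Bigr)+C'(\kappa)\varepsilon\,\mathfrak{N}_{T^{\alpha}}(\sigma,\omega).
\end{equation*}
Choosing $\varepsilon$ with $C'(\kappa)\varepsilon<\tfrac{1}{2}$ and absorbing yields (\ref{more}). The absorption step is the one piece that needs care: to make it rigorous I would work throughout with a fixed admissible truncation $T_{\sigma,\delta,R}^{\alpha}$, whose operator norm is a priori finite (by standard Schur/kernel estimates combined with $\mathcal{A}_{2}^{\alpha}$), do the absorption there, and then observe that the resulting bound is independent of $(\delta,R)$. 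This a priori finiteness is the main technical obstacle; once it is in place, everything else is chaining.

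Finally, for the reverse direction when $T^{\alpha}$ is elliptic (strongly elliptic in the range $n/2\leq\alpha<n$), the necessity of the one-tailed Muckenhoupt conditions and of the $1$-cube testing conditions follows from the standard argument via the pointwise lower bounds on $K^{\alpha}(x,x+tu)$ together with the norm inequality (\ref{strong type}), exactly as in \cite{SaShUr7}; no new ingredient is needed.
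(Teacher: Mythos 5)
Your proposal is correct and follows essentially the same route as the paper: invoke the polynomial testing theorem (Theorem 4 of \cite{Saw3}) for doubling weights with one weight $A_{\infty}$, reduce $\kappa$-testing to $1$-testing via Theorem \ref{Tp control by T1} and Proposition \ref{full control for doub} applied to both $T^{\alpha}$ and its dual, absorb the small multiples of $\mathfrak{N}_{T^{\alpha}}$, and cite \cite[Lemma 4.1]{SaShUr7} for the converse under (strong) ellipticity. Your added remark about carrying out the absorption on a fixed truncation $T_{\sigma,\delta,R}^{\alpha}$, where the operator norm is a priori finite, and then noting uniformity in $(\delta,R)$, is a point the paper leaves implicit, and is a welcome clarification rather than a deviation.
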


\begin{proof}
For convenience we take $\kappa =\kappa _{1}=\kappa _{2}$. From Theorem 4 of 
\cite{Saw3} we have the inequality%
\begin{equation*}
\mathfrak{N}_{T^{\alpha }}\left( \sigma ,\omega \right) \leq C\left( \sqrt{%
\mathcal{A}_{2}^{\alpha }\left( \sigma ,\omega \right) +\mathcal{A}%
_{2}^{\alpha }\left( \omega ,\sigma \right) }+\mathfrak{T}_{T^{\alpha
}}^{\left( \kappa \right) }\left( \sigma ,\omega \right) +\mathfrak{T}%
_{\left( T^{\alpha }\right) ^{\ast }}^{\left( \kappa \right) }\left( \omega
,\sigma \right) \right) ,
\end{equation*}%
where the constant $C$ depends on $C_{CZ}$ in (\ref{sizeandsmoothness'}) and
the doubling parameters $\left( \beta _{1},\gamma _{1}\right) ,\left( \beta
_{2},\gamma _{2}\right) $ of the weights $\sigma $ and $\omega $, as well as
on the $A_{\infty }$ parameters of one of the weights. From Theorem \ref{Tp
control by T1} above, we obtain that for every $0<\varepsilon _{1}<1$, there
is a positive constant $C\left( \kappa ,\varepsilon _{1}\right) $ such that 
\begin{equation*}
\mathfrak{FT}_{T^{\alpha }}^{\left( \kappa \right) }\left( \sigma ,\omega
\right) \leq C\left( \kappa ,\varepsilon _{1}\right) \mathfrak{FT}%
_{T^{\alpha }}\left( \sigma ,\omega \right) +\varepsilon _{1}\mathfrak{N}%
_{T^{\alpha }}\left( \sigma ,\omega \right) \ .
\end{equation*}%
Finally from Proposition \ref{full control for doub}, we obtain that for
every $0<\varepsilon _{2}<1$, there is a positive constant $C\left(
\varepsilon _{2}\right) $ such that%
\begin{equation*}
\mathfrak{FT}_{T^{\alpha }}\left( \sigma ,\omega \right) \leq \mathfrak{T}%
_{T^{\alpha }}\left( \sigma ,\omega \right) +C\left( \varepsilon _{2}\right) 
\sqrt{\mathcal{A}_{2}^{\alpha }\left( \sigma ,\omega \right) }+\varepsilon
_{2}\mathfrak{N}_{T^{\alpha }}\left( \sigma ,\omega \right) \ .
\end{equation*}%
Now we drop dependence on $\left( \sigma ,\omega \right) $ to reduce clutter
of notation, and combining inequalities we obtain%
\begin{eqnarray*}
\mathfrak{N}_{T^{\alpha }} &\leq &C\left( \sqrt{\mathcal{A}_{2}^{\alpha }}+%
\mathfrak{T}_{T^{\alpha }}^{\left( \kappa \right) }+\mathfrak{T}_{\left(
T^{\alpha }\right) ^{\ast }}^{\left( \kappa \right) }\right) \leq C\left( 
\sqrt{A_{2}^{\alpha }}+\mathfrak{FT}_{T^{\alpha }}^{\left( \kappa \right) }+%
\mathfrak{FT}_{\left( T^{\alpha }\right) ^{\ast }}^{\left( \kappa \right)
}\right) \\
&\leq &C\left( \sqrt{\mathcal{A}_{2}^{\alpha }}+C\left( \kappa ,\varepsilon
_{1}\right) \mathfrak{FT}_{T^{\alpha }}+\varepsilon _{1}\mathfrak{N}%
_{T^{\alpha }}+C\left( \kappa ,\varepsilon _{1}\right) \mathfrak{FT}_{\left(
T^{\alpha }\right) ^{\ast }}+\varepsilon _{1}\mathfrak{N}_{T^{\alpha
}}\right) \\
&\leq &C\sqrt{\mathcal{A}_{2}^{\alpha }}+CC\left( \kappa ,\varepsilon
_{1}\right) \left\{ \mathfrak{T}_{T^{\alpha }}+\mathfrak{T}_{\left(
T^{\alpha }\right) ^{\ast }}+C\left( \varepsilon _{2}\right) \left( \sqrt{%
\mathcal{A}_{2}^{\alpha }}+\sqrt{\mathcal{A}_{2}^{\alpha ,\ast }}\right)
+2\varepsilon _{2}\mathfrak{N}_{T^{\alpha }}\right\} +2C\varepsilon _{1}%
\mathfrak{N}_{T^{\alpha }} \\
&\leq &CC\left( \kappa ,\varepsilon _{1}\right) \left( \mathfrak{T}%
_{T^{\alpha }}+\mathfrak{T}_{\left( T^{\alpha }\right) ^{\ast }}\right)
+CC\left( \kappa ,\varepsilon _{1}\right) C\left( \varepsilon _{2}\right)
\left( \sqrt{\mathcal{A}_{2}^{\alpha }}+\sqrt{\mathcal{A}_{2}^{\alpha ,\ast }%
}\right) +\left\{ 2CC\left( \kappa ,\varepsilon _{1}\right) \varepsilon
_{2}+2C\varepsilon _{1}\right\} \mathfrak{N}_{T^{\alpha }}\ .
\end{eqnarray*}%
Choose first $\varepsilon _{1}>0$ so that $2C\varepsilon _{1}<\frac{1}{4}$,
and then choose $\varepsilon _{2}>0$ so that $2CC\left( \kappa ,\varepsilon
_{1}\right) \varepsilon _{2}<\frac{1}{4}$. We can then absorb the final term
on the right into the left hand side to obtain 
\begin{equation*}
\mathfrak{N}_{T^{\alpha }}\left( \sigma ,\omega \right) \leq C_{\kappa
}\left( \mathfrak{T}_{T^{\alpha }}\left( \sigma ,\omega \right) +\mathfrak{T}%
_{\left( T^{\alpha }\right) ^{\ast }}\left( \omega ,\sigma \right) +\sqrt{%
\mathcal{A}_{2}^{\alpha }\left( \sigma ,\omega \right) }+\sqrt{\mathcal{A}%
_{2}^{\alpha ,\ast }\left( \sigma ,\omega \right) }\right) ,
\end{equation*}%
for all suitable truncations of $T^{\alpha }$, and where the constant $%
C_{\kappa }$ depends on the doubling constants of the weights, and on the $%
A_{\infty }$ parameters of one of the weights. If $T^{\alpha }$ is elliptic,
and strongly elliptic if $\frac{n}{2}\leq \alpha <n$, then 
\begin{equation*}
\sqrt{\mathcal{A}_{2}^{\alpha }\left( \sigma ,\omega \right) }+\sqrt{%
\mathcal{A}_{2}^{\alpha ,\ast }\left( \sigma ,\omega \right) }\lesssim 
\mathfrak{N}_{T^{\alpha }}\left( \sigma ,\omega \right) ,
\end{equation*}%
by \cite[Lemma 4.1 on page 92.]{SaShUr7}.
\end{proof}

\begin{remark}
If we drop the assumption that one of the weights is $A_{\infty }$,
inequality (\ref{more}) remains true if we include on the right hand side
the Bilinear Indicator Cube Testing constant $\mathcal{BICT}_{T^{\alpha
}}\left( \sigma ,\omega \right) $ from \cite{Saw2}:%
\begin{equation}
\mathcal{BICT}_{T^{\alpha }}\left( \sigma ,\omega \right) \equiv \sup_{Q\in 
\mathcal{P}^{n}}\sup_{E,F\subset Q}\frac{1}{\sqrt{\left\vert Q\right\vert
_{\sigma }\left\vert Q\right\vert _{\omega }}}\left\vert \int_{F}T_{\sigma
}^{\alpha }\left( \mathbf{1}_{E}\right) \omega \right\vert <\infty ,
\label{def ind WBP}
\end{equation}%
where the second supremum is taken over all compact sets $E$ and $F$
contained in a cube $Q$.
\end{remark}

\subsection{Optimal cancellation conditions}

Using Theorem \ref{A infinity theorem}, we can now obtain a $T1$ version of
Theorem 5 in \cite{Saw3}. For $0\leq \alpha <n$, let $T^{\alpha }$ be a
continuous linear map from rapidly decreasing smooth test functions $%
\mathcal{S}$ to tempered distributions in $\mathcal{S}^{\prime }$, to which
is associated a kernel $K^{\alpha }\left( x,y\right) $, defined when $x\neq
y $, that satisfies the inequalities,%
\begin{equation}
\left\vert \partial _{x}^{\beta }\partial _{y}^{\gamma }K^{\alpha }\left(
x,y\right) \right\vert \leq A_{\alpha ,\beta ,\gamma ,n}\left\vert
x-y\right\vert ^{\alpha -n-\left\vert \beta \right\vert -\left\vert \gamma
\right\vert },\ \ \ \ \ \text{for all multiindices }\beta ,\gamma ;
\label{diff ineq}
\end{equation}%
such kernels are called \emph{smooth} $\alpha $-fractional Calder\'{o}%
n-Zygmund kernels on $\mathbb{R}^{n}$. An operator $T^{\alpha }$ is \emph{%
associated} with a kernel $K^{\alpha }$ if, whenever $f\in \mathcal{S}$ has
compact support, the tempered distribution $T^{\alpha }f$ can be identified,
in the complement of the support, with the function obtained by integration
with respect to the kernel, i.e.%
\begin{equation}
T^{\alpha }f\left( x\right) \equiv \int K^{\alpha }\left( x,y\right) f\left(
y\right) d\sigma \left( y\right) ,\ \ \ \ \ \text{for }x\in \mathbb{R}%
^{n}\setminus \func{Supp}f.  \label{identify}
\end{equation}%
The characterization in terms of (\ref{can cond}) in the next theorem is
identical to that in Stein \cite[Theorem 4 on page 306]{Ste2}, except that
the doubling measures $\sigma $ and $\omega $ appear here in place of
Legesgue measure in \cite{Ste2}.

\begin{theorem}
\label{Stein extension}Let $0\leq \alpha <n$. Suppose that $\sigma $ and $%
\omega $ are locally finite positive Borel doubling measures on $\mathbb{R}%
^{n}$. Suppose also that the measure pair $\left( \sigma ,\omega \right) $
satisfies the one-tailed Muckenhoupt conditions in (\ref{one-sided}), and
that in addition, one of the measures is an $A_{\infty }$ weight. Suppose
finally that $K^{\alpha }\left( x,y\right) $ is a smooth $\alpha $%
-fractional Calder\'{o}n-Zygmund kernel on $\mathbb{R}^{n}$. In the case $%
\alpha =0$, we also assume there is $T^{0}$ associated with the kernel $K^{0}
$ that is bounded on unweighted $L^{2}\left( \mathbb{R}^{n}\right) $.\newline
Then there exists a bounded operator $T^{\alpha }:L^{2}\left( \sigma \right)
\rightarrow L^{2}\left( \omega \right) $, that is associated with the kernel 
$K^{\alpha }$ in the sense that (\ref{identify}) holds, \emph{if and only if}
there is a positive constant $\mathfrak{A}_{K^{\alpha }}\left( \sigma
,\omega \right) $ so that%
\begin{eqnarray}
&&\int_{\left\vert x-x_{0}\right\vert <N}\left\vert \int_{\varepsilon
<\left\vert x-y\right\vert <N}K^{\alpha }\left( x,y\right) d\sigma \left(
y\right) \right\vert ^{2}d\omega \left( x\right) \leq \mathfrak{A}%
_{K^{\alpha }}\left( \sigma ,\omega \right) \ \int_{\left\vert
x_{0}-y\right\vert <N}d\sigma \left( y\right) ,  \label{can cond} \\
&&\ \ \ \ \ \ \ \ \ \ \ \ \ \ \ \ \ \ \ \ \ \ \ \ \ \text{for all }%
0<\varepsilon <N\text{ and }x_{0}\in \mathbb{R}^{n},  \notag
\end{eqnarray}%
along with a similar inequality with constant $\mathfrak{A}_{K^{\alpha ,\ast
}}\left( \omega ,\sigma \right) $, in which the measures $\sigma $ and $%
\omega $ are interchanged and $K^{\alpha }\left( x,y\right) $ is replaced by 
$K^{\alpha ,\ast }\left( x,y\right) =K^{\alpha }\left( y,x\right) $.
Moreover, if such $T^{\alpha }$ has minimal norm, then 
\begin{equation}
\left\Vert T^{\alpha }\right\Vert _{L^{2}\left( \sigma \right) \rightarrow
L^{2}\left( \omega \right) }\lesssim \mathfrak{A}_{K^{\alpha }}\left( \sigma
,\omega \right) +\mathfrak{A}_{K^{\alpha ,\ast }}\left( \omega ,\sigma
\right) +\sqrt{\mathcal{A}_{2}^{\alpha }\left( \sigma ,\omega \right) +%
\mathcal{A}_{2}^{\alpha }\left( \omega ,\sigma \right) },  \label{can char}
\end{equation}%
with implied constant depending on $C_{CZ}$, the doubling constants of the
weights, and the $A_{\infty }$ parameters of the $A_{\infty }$ weight. If $%
T^{\alpha }$ is elliptic, and strongly elliptic if $\frac{n}{2}\leq \alpha <n
$, the inequality can be reversed.
\end{theorem}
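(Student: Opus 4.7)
The plan is to prove both implications of this characterization, with the nontrivial direction (cancellation implies boundedness) reducing to Theorem \ref{A infinity theorem} via a truncation and weak-limit argument.

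\textbf{Forward direction (existence of $T^\alpha$ implies \eqref{can cond})}: Suppose $T^\alpha$ is bounded from $L^2(\sigma)$ to $L^2(\omega)$ with norm $\mathfrak{N}_{T^\alpha}$. Fix $x_0\in\mathbb{R}^n$ and $0<\varepsilon<N$. For any $x$ with $|x-x_0|<N$, the annulus $\{y:\varepsilon<|x-y|<N\}$ is contained in $B(x_0,2N)\setminus B(x,\varepsilon)$, so I would decompose
\begin{equation*}
\int_{\varepsilon<|x-y|<N}K^\alpha(x,y)d\sigma(y)=T^\alpha_{\sigma,\varepsilon,N}(\mathbf{1}_{B(x_0,N)})(x)+\mathrm{Err}(x),
\end{equation*}
where the error $\mathrm{Err}(x)$ is supported in $y\in B(x_0,2N)\setminus B(x_0,N)$ and hence controlled pointwise by the Poisson tail $\mathcal{P}^\alpha(B(x_0,N),\sigma)$, whose $L^2(\omega, B(x_0,N))$ integral is $\lesssim \mathcal{A}_2^\alpha\cdot|B(x_0,N)|_\sigma$. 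The main term contributes $\lesssim\mathfrak{N}_{T^\alpha}^2|B(x_0,N)|_\sigma$ by essential independence of truncations (stated in the text after \eqref{two weight'}). Thus $\mathfrak{A}_{K^\alpha}\lesssim \mathfrak{N}_{T^\alpha}^2+\mathcal{A}_2^\alpha$, which handles this direction.

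\textbf{Backward direction (cancellation implies existence)}: For each $0<\varepsilon<N<\infty$, the truncated kernel $K^\alpha_{\varepsilon,N}$ is bounded, so it defines a bounded operator $T^\alpha_{\varepsilon,N}$ on $L^2(\sigma)$ (with a priori norm depending on $\varepsilon, N$). The key step is to verify the $1$-Cube Testing condition $\mathfrak{T}_{T^\alpha_{\varepsilon,N}}(\sigma,\omega)$ uniformly in $\varepsilon,N$, and similarly for the dual. Given a cube $Q$, I would pick a ball $B_Q=B(c_Q,N_Q)$ with $N_Q\approx\ell(Q)$ so that $Q\subset B_Q\subset c'Q$, write $\mathbf{1}_Q=\mathbf{1}_{B_Q}-\mathbf{1}_{B_Q\setminus Q}$, and apply \eqref{can cond} to the first piece with $x_0=c_Q$ and $N=N_Q$. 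The second piece is controlled using the doubling of $\sigma$ together with the Muckenhoupt tail estimates, giving a contribution of size $\sqrt{\mathcal{A}_2^\alpha}\,|Q|_\sigma^{1/2}$. Summing up,
\begin{equation*}
\mathfrak{T}_{T^\alpha_{\varepsilon,N}}(\sigma,\omega)\lesssim \mathfrak{A}_{K^\alpha}(\sigma,\omega)+\sqrt{\mathcal{A}_2^\alpha(\sigma,\omega)},
\end{equation*}
uniformly in $(\varepsilon,N)$, and analogously for the adjoint with $\mathfrak{A}_{K^{\alpha,*}}$ and $\mathcal{A}_2^{\alpha,*}$. Since the smooth kernel $K^\alpha$ satisfies \eqref{sizeandsmoothness'} for any $(\kappa_1+\delta,\kappa_2+\delta)$, the doubling-exponent hypotheses of Theorem \ref{A infinity theorem} can be met by choosing $\kappa_1,\kappa_2$ large, and we conclude
\begin{equation*}
\mathfrak{N}_{T^\alpha_{\varepsilon,N}}(\sigma,\omega)\lesssim \mathfrak{A}_{K^\alpha}+\mathfrak{A}_{K^{\alpha,*}}+\sqrt{\mathcal{A}_2^\alpha+\mathcal{A}_2^{\alpha,*}},
\end{equation*}
uniformly. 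By Banach--Alaoglu, a weak operator topology limit along a sequence $\varepsilon_k\downarrow 0$, $N_k\uparrow\infty$ yields a bounded operator $T^\alpha:L^2(\sigma)\to L^2(\omega)$ satisfying \eqref{can char}. The representation \eqref{identify} is verified for compactly supported $f\in\mathcal{S}$ and $x\notin\mathrm{Supp}\,f$ by dominated convergence using the pointwise kernel bound $|K^\alpha(x,y)|\lesssim|x-y|^{\alpha-n}$.

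\textbf{Main obstacle}: The delicate point is the uniform-in-$(\varepsilon,N)$ verification of the cube testing condition from the ball-based cancellation \eqref{can cond}. Passing from balls to cubes is straightforward by comparability, but one must carefully absorb the boundary and tail pieces of the truncated annulus (where the condition does not directly match $\mathbf{1}_Q$) using the one-tailed Muckenhoupt constants $\mathcal{A}_2^\alpha$ and $\mathcal{A}_2^{\alpha,*}$. The reverse inequality in \eqref{can char} under the ellipticity hypothesis is immediate from the forward direction combined with the lower bound $\sqrt{\mathcal{A}_2^\alpha}+\sqrt{\mathcal{A}_2^{\alpha,*}}\lesssim\mathfrak{N}_{T^\alpha}$ from \cite[Lemma 4.1]{SaShUr7}.
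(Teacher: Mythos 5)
Your overall architecture (derive uniform testing constants for the a priori bounded truncated operators from \eqref{can cond}, feed them into Theorem \ref{A infinity theorem}, pass to a weak operator limit, and get the forward direction from uniform boundedness of sharp truncations) is the intended one: the paper itself gives no details, deferring to the proof of Theorem 5 of \cite{Saw2} with Theorem \ref{A infinity theorem} substituted for the $Tp$/BICT theorem. However, both of the places where you wave at the Muckenhoupt condition are exactly where the real work lies, and as written they are gaps. In the backward direction, when you write $\mathbf{1}_Q=\mathbf{1}_{B_Q}-\mathbf{1}_{B_Q\setminus Q}$ (and likewise when you compare the $x$-centered truncation annulus in \eqref{can cond} with the fixed indicator $\mathbf{1}_{B(x_0,N)}$), the discrepancy set hugs the boundary of $Q$ (resp.\ of the ball), so for $x\in Q$ near that boundary the term $T^{\alpha}_{\sigma,\varepsilon,N}(\mathbf{1}_{B_Q\setminus Q})(x)$ is a genuinely singular interaction between $\sigma$-mass just outside and $\omega$-mass just inside; its $L^{2}(\omega\restriction Q)$ norm is \emph{not} controlled by $\sqrt{\mathcal{A}_{2}^{\alpha}}\,|Q|_{\sigma}^{1/2}$ — if it were, Proposition \ref{full control for doub} and the $\varepsilon\mathfrak{N}$ absorption in Theorem \ref{Tp control by T1} would be unnecessary. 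The correct mechanism, consistent with the rest of the paper, is to split off only a thin boundary layer (as in Lemma \ref{boundary doubling}, using doubling of $\sigma$ to make its $\sigma$-measure a small fraction of $|Q|_{\sigma}$), bound that piece by a small multiple of the \emph{a priori finite} norm $\mathfrak{N}_{T^{\alpha}_{\varepsilon,N}}(\sigma,\omega)$, control the far part by the Poisson tail $\mathcal{A}_{2}^{\alpha}$, and then absorb the small-norm terms after invoking Theorem \ref{A infinity theorem}. Without this absorption step your claimed uniform bound $\mathfrak{T}_{T^{\alpha}_{\varepsilon,N}}\lesssim\mathfrak{A}_{K^{\alpha}}+\sqrt{\mathcal{A}_{2}^{\alpha}}$ is not justified.

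The forward direction has the same flaw in miniature: your error term $\mathrm{Err}(x)$, supported in $N\le|y-x_{0}|<2N$, is \emph{not} pointwise dominated by $\mathcal{P}^{\alpha}(B(x_{0},N),\sigma)$ when $x$ is close to the sphere $|x-x_{0}|=N$, since $|x-y|$ can be as small as $\varepsilon$ there. The simpler correct route avoids the split altogether: for $|x-x_{0}|<N$ the annulus $\{\varepsilon<|x-y|<N\}$ lies in $B(x_{0},2N)$, so the inner integral is the sharply truncated operator applied to $\mathbf{1}_{B(x_{0},2N)}\sigma$, and uniform boundedness of sharp truncations (which itself requires $\mathcal{A}_{2}^{\alpha}$, as noted after \eqref{two weight'}) together with doubling of $\sigma$, $|B(x_{0},2N)|_{\sigma}\lesssim|B(x_{0},N)|_{\sigma}$, gives \eqref{can cond} with $\mathfrak{A}_{K^{\alpha}}\lesssim\mathfrak{N}_{T^{\alpha}}^{2}+\mathcal{A}_{2}^{\alpha}$. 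With these two repairs (and the routine check that sharp and smooth truncations are interchangeable under $\mathcal{A}_{2}^{\alpha}$ before applying Theorem \ref{A infinity theorem}), your outline matches the intended argument; the remaining steps (choice of large $\kappa_{i}$ for the smooth kernel, weak limit, identification \eqref{identify}, and the reverse inequality via ellipticity and \cite[Lemma 4.1]{SaShUr7}) are fine.
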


\begin{proof}
The proof follows almost verbatim that of Theorem 5 in \cite{Saw2}, but
using Theorem \ref{A infinity theorem} above instead of the $Tp$ theorem
with Bilinear Indicator/Cube Testing in \cite{Saw2}, and which thus
eliminates both the polynomials and the indicator of a compact set $E$ that
appear in the characterization in Theorem 5 of \cite{Saw2}. The
straightforward verification of the details is left to the reader.
\end{proof}

\begin{description}
\item[Concluding comments] The $T1$ theorem here is proved for general CZ
operators, and thus in the absence of any special positivity properties of
the CZ kernels $K^{\alpha }$. As a consequence there is no \emph{catalyst}
available to enable control of the difficult `far below' and `stopping'
terms by `goodness' of cubes in the NTV bilinear Haar decomposition (see
e.g. \cite{NTV4}). In the case of the Hilbert transform, the positivity of
the derivative of the convolution kernel $\frac{1}{x}$ permits the
derivation of a strong catalyst, namely the energy condition, from the
testing and Muckenhoupt conditions (see e.g. \cite{LaSaShUr3}). But the lack
of a suitable catalyst for general CZ operators, see \cite{SaShUr11} and 
\cite{Saw1} for negative results, limits us to using the weighted Alpert
wavelets in \cite{RaSaWi} with doubling measures having one weight in $%
A_{\infty }$. It is an intriguing open question whether or not these
restrictions on the weight pair can be removed. There is no known example of
the failure of a $T1$ theorem for fractional CZ operators.
\end{description}

\end{document}